\begin{document}
\baselineskip=15pt

\numberwithin{equation}{section}

\newtheorem{thm}{Theorem}[section]
\newtheorem{lem}[thm]{Lemma}
\newtheorem{cor}[thm]{Corollary}
\newtheorem{Prop}[thm]{Proposition}
\newtheorem{Def}[thm]{Definition}
\newtheorem{Rem}[thm]{Remark}
\newtheorem{Ex}[thm]{Example}

\newcommand{\A}{\mathbb{A}}
\newcommand{\B}{\mathbb{B}}
\newcommand{\C}{\mathbb{C}}
\newcommand{\D}{\mathbb{D}}
\newcommand{\E}{\mathbb{E}}
\newcommand{\F}{\mathbb{F}}
\newcommand{\G}{\mathbb{G}}
\newcommand{\I}{\mathbb{I}}
\newcommand{\J}{\mathbb{J}}
\newcommand{\K}{\mathbb{K}}
\newcommand{\M}{\mathbb{M}}
\newcommand{\N}{\mathbb{N}}
\newcommand{\Q}{\mathbb{Q}}
\newcommand{\R}{\mathbb{R}}
\newcommand{\T}{\mathbb{T}}
\newcommand{\U}{\mathbb{U}}
\newcommand{\V}{\mathbb{V}}
\newcommand{\W}{\mathbb{W}}
\newcommand{\X}{\mathbb{X}}
\newcommand{\Y}{\mathbb{Y}}
\newcommand{\Z}{\mathbb{Z}}
\newcommand\ca{\mathcal{A}}
\newcommand\cb{\mathcal{B}}
\newcommand\cc{\mathcal{C}}
\newcommand\cd{\mathcal{D}}
\newcommand\ce{\mathcal{E}}
\newcommand\cf{\mathcal{F}}
\newcommand\cg{\mathcal{G}}
\newcommand\ch{\mathcal{H}}
\newcommand\ci{\mathcal{I}}
\newcommand\cj{\mathcal{J}}
\newcommand\ck{\mathcal{K}}
\newcommand\cl{\mathcal{L}}
\newcommand\cm{\mathcal{M}}
\newcommand\cn{\mathcal{N}}
\newcommand\co{\mathcal{O}}
\newcommand\cp{\mathcal{P}}
\newcommand\cq{\mathcal{Q}}
\newcommand\rr{\mathcal{R}}
\newcommand\cs{\mathcal{S}}
\newcommand\ct{\mathcal{T}}
\newcommand\cu{\mathcal{U}}
\newcommand\cv{\mathcal{V}}
\newcommand\cw{\mathcal{W}}
\newcommand\cx{\mathcal{X}}
\newcommand\ocd{\overline{\cd}}

\def\c{\centerline}
\def\ov{\overline}
\def\emp {\emptyset}
\def\pa {\partial}
\def\bl{\setminus}
\def\op{\oplus}
\def\sbt{\subset}
\def\un{\underline}
\def\al {\alpha}
\def\bt {\beta}
\def\de {\delta}
\def\Ga {\Gamma}
\def\ga {\gamma}
\def\lm {\lambda}
\def\Lam {\Lambda}
\def\om {\omega}
\def\Om {\Omega}
\def\sa {\sigma}
\def\vr {\varepsilon}
\def\va {\varphi}

\title{\bf Groundstates for nonlinear fractional Choquard equations with general nonlinearities\thanks{Partially supported by NSFC (11101374, 11271331)}}

\author{Zifei Shen,\ \ Fashun Gao,\ \ Minbo Yang\thanks{M. Yang is the corresponding author: mbyang@zjnu.edu.cn}
\\
\\
{\small Department of Mathematics, Zhejiang Normal University} \\ {\small  Jinhua, Zhejiang, 321004, P. R. China}}

\date{}
\maketitle

\begin{abstract}
We study the following nonlinear Choquard equation driven by fractional Laplacian:
$$
(-\Delta)^{s}u+ u
=\big(|x|^{-\mu}\ast F(u)\big)f(u)\hspace{4.14mm}\mbox{in}\hspace{1.14mm} \mathbb{R}^N,
$$
where $N\geq3$, $s\in(0,1)$ and $\mu\in(0,N)$. By supposing that the nonlinearities satisfy the general Berestycki-Lions type conditions \cite{BL}, we are able to prove the existence of groundstates for this equation by variational methods.
 \vspace{0.3cm}

\noindent{\bf Mathematics Subject Classifications (2000):} 35J50,
35J60, 35A15

\vspace{0.3cm}

 \noindent {\bf Keywords:} Fractional Laplacian; Groundstates; Choquard equation; Variational methods.
\end{abstract}

\section{Introduction and main results}

\ \ \ \ The aim of this paper is to consider the following nonlinear Choquard equation involving a fractional Laplacian:

\begin{equation}\label{CFL}
\left\{\begin{array}{l}
\displaystyle(-\Delta)^{s}u+ u
=\big(|x|^{-\mu}\ast F(u)\big)f(u)\hspace{4.14mm}\mbox{in}\hspace{1.14mm} \mathbb{R}^N,\\
\displaystyle u\in H^{s}(\mathbb{R}^N),\hspace{10.6mm}
\end{array}
\right.
\end{equation}
where $N\geq3$, $s\in(0,1)$, $\mu\in(0,N)$ and $F(u)=\int_{0}^{u}f(\tau)d\tau\in C^{1}(\mathbb{R},\mathbb{R})$. The
fractional Laplacian $(-\Delta)^{s}$ of a function $v:\mathbb{R}^N\rightarrow\mathbb{R}$ is defined by
\begin{equation}\label{FLF}
\mathcal{F}((-\Delta)^{s}v)(\xi)=|\xi|^{2s}\mathcal{F}(v)(\xi),
\end{equation}
where $\mathcal{F}$ is the Fourier transform. The operator $(-\Delta)^{s}$ can be seen as the infinitesimal generators of L\'{e}vy stable diffusion processes (see \cite{AP, La}).  The L\'{e}vy processes, extending Brownian
walk models in a natural way, occur widely in physics, chemistry and biology
and recently the stable L\'{e}vy processes, that give rise to equations with
the fractional Laplacians, have attracted much attention from many mathematicians. When $v$ is
smooth enough, it can also be computed by the following singular integral:
\begin{equation}\label{FLPV}
(-\Delta)^{s}v(x)=c_{N,s}P.V.\int_{\mathbb{R}^N}\frac{v(x)-v(y)}{|x-y|^{N+2s}}dy,
\end{equation}
where $P.V.$ is used abbreviation for "in the sense of principal value" and $c_{N,s}$ is a normalization constant. This integral makes sense directly when $s<\frac12$ and $v\in C^{0,\alpha}(\R^N)$ with $\alpha>2s$, or  $v\in C^{1,\alpha}(\R^N)$ with $1+\alpha>2s$. Here we say that a function $u\in C(\R^N)$ is a classical
solution of of \eqref{CFL} if $(-\Delta)^{s}$ can be written as \eqref{FLPV} and the equation is satisfied pointwise in all $\R^N$. Obviously, $(-\Delta)^{s}$ on $\R^N$ with $s\in (0,1)$ is a nonlocal operator. In the remarkable paper by Caffarelli and Silvestre \cite{CS2}, the authors introduced the $s-$harmonic extension technique and allow to transform the nonlocal problem into a local one via the Dirichlet-to-Neumann map. For $u\in H^{s}(\mathbb{R}^N)$, one calls its $s-$harmonic
extension $w\doteq E_{s}(u)$ by the solution to the following problem
$$
\left\{\begin{array}{l}
\displaystyle -div(y^{1-2s}\nabla w)=0\hspace{4.14mm} \mbox{in}\hspace{1.14mm} \mathbb{R}_{+}^{N+1},\\
\displaystyle \hspace{13.14mm}w(x,0)=u\hspace{4.14mm} \mbox{on}\hspace{1.14mm} \mathbb{R}^{N}.
\end{array}
\right.
$$
As shown in \cite{CS}, $(-\Delta)^{s}$ can also be characterized by
\begin{equation}\label{FLDN}
(-\Delta)^{s}u(x)=-\frac{1}{\kappa_{s}}\lim_{y\rightarrow0^{+}}y^{1-2s}\frac{\partial w}{\partial y}(x,y),\hspace{8.14mm}\forall u\in H^{s}(\mathbb{R}^N).
\end{equation}
Thus we can see that the problem \eqref{CFL} can be transformed into the following local problem
\begin{equation}\label{CFLL}
\left\{\begin{array}{l}
\displaystyle -div(y^{1-2s}\nabla w)=0\hspace{30.64mm} \mbox{in}\hspace{1.14mm} \mathbb{R}_{+}^{N+1},\\
\displaystyle  \partial_{\nu}^{s}w= -u+\big(|x|^{-\mu}\ast F(u)\big)f(u)  \hspace{12.6mm} \mbox{on}\hspace{1.14mm} \mathbb{R}^{N},
\end{array}
\right.
\end{equation}
where
$$
\partial_{\nu}^{s}w(x,0)\doteq-\frac{1}{\kappa_{s}}\lim_{y\rightarrow0^{+}}y^{1-2s}\frac{\partial w}{\partial y}(x,y), \ \ \forall\ \  x\in\mathbb{R}^N.
$$

When $s=1$, \eqref{CFL} becomes the generalized Choquard equation:
\begin{equation}\label{Nonlocal.S1}
-\Delta u+ u=\big(|x|^{-\mu}\ast F(u)\big)f(u)\ \mbox{in}\ \mathbb{R}^N.
\end{equation}
In recent years, the problem of existence and properties of the solutions for the nonlinear Choquard equation \eqref{CFL} have attracted a lot of attention. In \cite{L1}, Lieb  proved the existence and uniqueness, up to translations,
of the ground state to equation \eqref{Nonlocal.S1}. In \cite{Ls}, Lions showed the existence of
a sequence of radially symmetric solutions. Involving the properties of the ground state solutions,
 Ma and Zhao \cite{MZ} considered the generalized Choquard equation \eqref{Nonlocal.S1} for $q\geq 2$, and they proved that every positive solution of (\ref{Nonlocal.S1}) is radially symmetric and monotone decreasing about some point, under the assumption that a certain
set of real numbers, defined in terms of $N, \alpha$ and $q$, is nonempty. Under the
same assumption, Cingolani, Clapp and Secchi \cite{CCS1}  gave some existence and
multiplicity results in the electromagnetic case, and established the regularity and
some decay asymptotically at infinity of the ground states of \eqref{Nonlocal.S1}. In \cite{MS1}, Moroz and Van
Schaftingen  eliminated this restriction and showed the regularity, positivity
and radial symmetry of the ground states for the optimal range of parameters, and
derived decay asymptotically at infinity for them as well. Moreover, Moroz and Van
Schaftingen in \cite{MS2} also considered  the existence of ground states under the assumption of Berestycki-Lions type.

For fractional Laplacian with local type nonlinearities, Frank and Lenzmann \cite{FL} proved uniqueness of ground state solutions $Q=Q(|x|)>0$ for the nonlinear equation $(-\Delta)^{s}Q+Q-Q^{\alpha+1}=0$ in $\mathbb{R}$. In \cite{FQT}, Felmer, Quaas and Tan proved the existence of positive solutions of nonlinear Schr\"{o}dinger equation with fractional Laplacian in $\mathbb{R}^N$. For more investigations on fractional Laplacian, one can see \cite{BCDS2, BCDS1, CS2, CS, CW, RS} and references therein. For fractional Laplacian with nolocal Hartree type nonlinearities, the problem has also attractted a lot of interest. in the case $s=\frac{1}{2}$, Frank and Lenzmann \cite{FL1} proved analyticity and radial symmetry of ground state solutions $u>0$ for the $L^{2}$-critical boson star equation
$$
\sqrt{-\Delta}u-(|x|^{-1}\ast |u|^{2})u=-u\hspace{4.14mm}\mbox{in}\hspace{1.14mm} \mathbb{R}^N.
$$
Coti Zelati and Nolasco \cite{CN1, CN2} obtained the existence of a ground state of some fractional Schr\"{o}dinger equation with the operator $(\sqrt{-\Delta+m^{2}})$. Lei \cite{Le} considered positive solutions of a fraction order equation:
$$
(I-\Delta)^{s}u=pu^{p-1}(|x|^{-\mu}\ast|u|^{p}) \hspace{3.14mm}\mbox{in}\hspace{1.14mm} \mathbb{R}^N,
$$
with the suitable assumption of $N,s, p,\mu$.
In \cite{DSS}, D'avenia, Siciliano and Squassina obtained regularity, existence, nonexistence, symmetry as well as decays properties of ground state solutions for the nonlocal problem
$$
(-\Delta)^{s}u+ \omega u
=(|x|^{-\mu}\ast |u|^{p})|u|^{p-2}u \hspace{4.14mm}\mbox{in}\hspace{1.14mm} \mathbb{R}^N.
$$

In this paper we are going to prove the existence of solutions to Choquard equation \eqref{CFL}, we assume that nonlinearity $f\in C(\mathbb{R},\mathbb{R})$ satisfies the general Berestycki-Lions type assumption \cite{BL}:

$(f_1)$ There exists $C>0$ such that for every $t\in\mathbb{R}$,
$$
|tf(t)|\leq C(|t|^{2}+|t|^{\frac{2N-\mu}{N-2s}});
$$

$(f_2)$ Let $F:t\in\mathbb{R}\mapsto\int_{0}^{t}f(\tau)d\tau$ and suppose
$$
\lim_{t\rightarrow0}\frac{F(t)}{|t|^{2}}=0\hspace{2.14mm}\mbox{and}\hspace{2.14mm}\lim_{t\rightarrow\infty}\frac{F(t)}{|t|^{\frac{2N-\mu}{N-2s}}}=0;
$$

$(f_3)$ There exists $t_{0}\in\mathbb{R}$ such that $F(t_{0})\neq0$.

 In the present paper, $H^{s}(\mathbb{R}^N)$ is the usual fractional Sobolev space defined by
$$
H^{s}(\mathbb{R}^N)=\{u\in L^{2}(\mathbb{R}^N):\int_{\mathbb{R}^{N}}(|\xi|^{2s}\hat{u}^{2}+\hat{u}^{2})d\xi<\infty\}
$$
with equipped norms
$$
\|u\|_{H}=(\int_{\mathbb{R}^{N}}(|\xi|^{2s}\hat{u}^{2}+\hat{u}^{2})d\xi)^{\frac{1}{2}},
$$
where $\hat{u}\doteq\mathcal{F}(u)$. Notice that, for every $u\in H^{s}(\mathbb{R}^N)$, there holds
$$
\int_{\mathbb{R}^{N}}|(-\Delta)^{\frac{s}{2}}u(x)|^{2}=
\int_{\mathbb{R}^{N}}(\widehat{(-\Delta)^{\frac{s}{2}}u(\xi)})^{2}d\xi=
\int_{\mathbb{R}^{N}}(|\xi|^{s}\hat{u}(\xi))^{2}d\xi=
\int_{\mathbb{R}^{N}}|\xi|^{2s}\hat{u}^{2}d\xi<\infty,
$$
and it follows from Plancherel's theorem that
$$
\|u\|_{H}=(\int_{\mathbb{R}^{N}}(|(-\Delta)^{\frac{s}{2}}u(x)|^{2}+u^{2}))^{\frac{1}{2}}.
$$
From Lemma 2.1 of \cite{FQT}, we know $H^{s}(\mathbb{R}^N)$ continuously embedded into $L^p(\R^N)$ for $p\in [2, 2^{\ast}(s)]$ and compactly embedded into $L^p_{loc}(\R^N)$ for $p\in [2, 2^{\ast}(s))$, where $2^{\ast}(s)=2N/(N-2s)$.

Since we are concerned with the nonlocal problems, we would like to recall the well-known Hardy-Littlewood-Sobolev inequality.
\begin{Prop}\label{HLS}
 (Hardy-Littlewood-Sobolev inequality). (See \cite{LL}.) Let $t,r>1$ and $0<\mu<N$ with $1/t+\mu/N+1/r=2$, $f\in L^{t}(\mathbb{R}^N)$ and $h\in L^{r}(\mathbb{R}^N)$. There exists a sharp constant $C(t,N,\mu,r)$, independent of $f,h$, such that
$$
\int_{\mathbb{R}^{N}}\int_{\mathbb{R}^{N}}\frac{f(x)h(y)}{|x-y|^{\mu}}\leq C(t,N,\mu,r) |f|_{t}|h|_{r}.
$$
\end{Prop}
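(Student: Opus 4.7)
The plan is to prove the bilinear Hardy--Littlewood--Sobolev inequality by first reducing it to a linear mapping property of the Riesz potential
$$
I_\mu f(y) := \int_{\R^N}\frac{f(x)}{|x-y|^\mu}\,dx = (f \ast |\cdot|^{-\mu})(y),
$$
and then establishing that mapping property by a kernel-splitting plus Marcinkiewicz interpolation argument. By H\"older's inequality,
$$
\int_{\R^N}\!\!\int_{\R^N}\frac{f(x)h(y)}{|x-y|^\mu}\,dx\,dy \;=\; \int_{\R^N} h(y)\, I_\mu f(y)\,dy \;\leq\; |h|_r\, |I_\mu f|_{r'},
$$
so the proposition follows once I show $|I_\mu f|_{r'} \leq C\,|f|_t$ whenever $\tfrac{1}{r'} = \tfrac{1}{t} + \tfrac{\mu}{N} - 1$, which is exactly the scaling condition $\tfrac{1}{t}+\tfrac{\mu}{N}+\tfrac{1}{r}=2$ together with $1<t, r$ forcing $t\in(1,N/\mu)$.

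Next I would note that $|x|^{-\mu}$ sits on the boundary of $L^{N/\mu}(\R^N)$: it belongs to the weak space $L^{N/\mu,\infty}$ because the distribution function satisfies $|\{|x|^{-\mu}>\lambda\}|= c_N\lambda^{-N/\mu}$. To exploit this, for each threshold $\lambda>0$ I split the kernel at a radius $R=R(\lambda)$ to be chosen:
$$
|x|^{-\mu} = K_R(x) + K^R(x), \qquad K_R := |x|^{-\mu}\chi_{|x|\leq R},\ K^R := |x|^{-\mu}\chi_{|x|>R}.
$$
Because $K_R \in L^1$ with $|K_R|_1 \sim R^{N-\mu}$ and $K^R \in L^{t'}$ (in the appropriate range) with $|K^R|_{t'}\sim R^{N/t'-\mu}$, Young's inequality gives $|K_R\ast f|_t \leq |K_R|_1 |f|_t$ and $|K^R\ast f|_\infty \leq |K^R|_{t'}|f|_t$. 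I choose $R$ so that $|K^R\ast f|_\infty \leq \lambda/2$, which forces $\{|I_\mu f|>\lambda\} \subset \{|K_R\ast f|>\lambda/2\}$. Chebyshev and the $L^t$ bound then yield the weak-type estimate $|\{|I_\mu f|>\lambda\}| \leq C(\lambda^{-1}|f|_t)^{r'}$, i.e.\ $I_\mu : L^t \to L^{r',\infty}$ boundedly, for each admissible $t$.

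Finally, to upgrade to the strong-type bound I apply Marcinkiewicz interpolation between two such weak-type endpoints with exponents $t_0<t<t_1$ lying strictly inside $(1,N/\mu)$; since the target exponent $r'$ depends on $t$ by the scaling relation above, one may choose $t_0, t_1$ on either side of $t$ so that the interpolation theorem delivers $I_\mu : L^t \to L^{r'}$ with a finite constant $C(t,N,\mu,r)$. Combined with the first step this proves the stated inequality. The main technical obstacle is bookkeeping the kernel-splitting optimization: one must track the $R$-dependence of $|K_R|_1$, $|K^R|_{t'}$ and the critical threshold $\lambda$ simultaneously to recover the exact scaling exponent $r'$, and verify that the constraint $1<t,r$ corresponds precisely to the open range of admissible exponents. (The sharp constant, while part of the classical statement in \cite{LL}, requires Lieb's rearrangement/conformal invariance argument and is not needed for our applications.)
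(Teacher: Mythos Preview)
Your proof sketch is correct and follows one of the standard routes to the Hardy--Littlewood--Sobolev inequality: dualize to the Riesz potential, split the kernel at a scale $R$ chosen in terms of the level $\lambda$ to obtain weak-type $(t,r')$ bounds, and then interpolate via Marcinkiewicz. The bookkeeping you flag is routine, and your observation that the sharp constant requires Lieb's separate rearrangement/conformal argument is accurate and appropriately noted.

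However, the paper itself does not prove this proposition at all. It is stated as a classical result with the citation ``(See \cite{LL})'' and used as a black box throughout the paper. So there is no ``paper's own proof'' to compare against: the authors simply import the inequality from Lieb--Loss. Your proposal therefore goes well beyond what the paper does, supplying an actual argument where the paper only gives a reference. If your goal is to match the paper, a one-line citation suffices; if your goal is a self-contained treatment, what you have written is a sound outline of the standard proof.
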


\begin{Rem}
In general, if $F(t)=|t|^{q_{0}}$ for some $q_{0}>0$. By Har-Littlewood-Sobolev inequality,
$$
\int_{\mathbb{R}^{N}}\int_{\mathbb{R}^{N}}\frac{F(u(x))F(u(y))}{|x-y|^{\mu}}
$$
is well defined if $F(u)\in L^{t}(\mathbb{R}^N)$ for $t>1$ defined by
$$
\frac{2}{t}+\frac{\mu}{N}=2.
$$
Thus, for $u\in H^{s}(\mathbb{R}^N)$, there must hold
$$
\frac{2N-\mu}{N}\leq q_{0}\leq\frac{2N-\mu}{N-2s}.
$$
Moreover, if $f$ satisfies $(f_1)$ with $0<\mu<2s$, then we know
$$
2<\frac{2N-\mu}{N-2s}.
$$
\end{Rem}
Associated to equation \eqref{CFL}, we can introduce the energy functional given by
$$
J(u)=\frac{1}{2}\int_{\mathbb{R}^N}|(-\Delta)^{\frac{s}{2}}u|^{2}+\frac{1}{2}\int_{\mathbb{R}^N} |u|^{2}-\frac{1}{2}\int_{\mathbb{R}^N}\big(|x|^{-\mu}\ast F(u)\big)F(u),
$$
clearly, $J$ is well defined on $H^{s}(\mathbb{R}^N)$ and belongs to $C^1$. We say a function $u$ is a weak solution of \eqref{CFL}, if $u\in H^{s}(\mathbb{R}^N)\backslash\{{0}\}$ and satisfies
$$
\int_{\mathbb{R}^N}(-\Delta)^{\frac{s}{2}}u(-\Delta)^{\frac{s}{2}}\varphi  +\int_{\mathbb{R}^N}u\varphi  =\int_{\mathbb{R}^N}\big(|x|^{-\mu}\ast F(u)\big)f(u)\varphi
$$
for all $\varphi\in C_{0}^{\infty}(\mathbb{R}^N)$.  And so  $u$ is a weak solution of \eqref{CFL} if and only if $u$ is a critical point of functional $J$.  Furthermore, a function $u_{0}$ is called a ground state of \eqref{CFL} if $u_{0}$ is a critical point of \eqref{CFL} and $J(u_{0})\leq J(u)$ holds for any critical point $u$ of \eqref{CFL}, i.e.
\begin{equation}\label{c}
J(u_{0})=c:=\inf \Big\{J(u):u\in H^{s}(\mathbb{R}^N)\backslash\{{0}\} \mbox{ is a critical point of \eqref{CFL}} \Big\}.
\end{equation}

The main result of this paper is to establish the existence of groundstate solution for \eqref{CFL} under assumptions $(f_1)--(f_3)$. The result says that
\begin{thm}\label{EXS}
 Assume that $N\geq3$, $s\in(0,1)$ and $\mu\in(0,2s)$. If $f\in C^1(\mathbb{R},\mathbb{R})$ satisfies $(f_1)$, $(f_2)$ and $(f_3)$, then equation \eqref{CFL} has at least one ground state solution.
\end{thm}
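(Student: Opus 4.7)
\medskip
\noindent\emph{Sketch of approach.} The plan is to treat \eqref{CFL} by a mountain-pass argument combined with Jeanjean's monotonicity trick, to compensate for the absence of an Ambrosetti-Rabinowitz hypothesis in the Berestycki-Lions setting, and then to identify the mountain-pass level with the ground-state energy via the fractional Pohozaev identity, in the spirit of Moroz and Van Schaftingen's treatment of the local Choquard equation.

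The first step is to establish the mountain-pass geometry for $J$. On the one hand, $(f_1)$--$(f_2)$ combined with Proposition \ref{HLS} and the embedding $H^s(\R^N)\hookrightarrow L^p(\R^N)$ for $p\in[2,2^{*}(s)]$ yield
\[
\int_{\R^N}\!\int_{\R^N}\frac{F(u(x))F(u(y))}{|x-y|^{\mu}}\,dx\,dy \;\le\; C\bigl(\|u\|_H^{4} + \|u\|_H^{2(2N-\mu)/(N-2s)}\bigr),
\]
so $J(u)\ge\alpha>0$ on a small $H^s$-sphere. On the other hand, $(f_3)$ lets me construct a compactly supported $u_0\in H^s(\R^N)$ approximating $t_0\mathbf{1}_{B_R}$ so that $F(u_0)$ has constant sign of size $\ge\varepsilon>0$ on a set of positive measure; the nonlocal term $\int\!\int|x-y|^{-\mu}F(u_0)F(u_0)$ is then strictly positive. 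Under the dilation $u_{0,t}(x):=u_0(x/t)$ the three pieces of $J$ scale as $t^{N-2s}$, $t^{N}$, and $t^{2N-\mu}$ respectively; since $2N-\mu>N>N-2s$, the path $t\mapsto J(u_{0,t})$ runs to $-\infty$ as $t\to\infty$, defining a mountain-pass level $c_{\mathrm{mp}}>0$.

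To obtain a bounded Palais-Smale sequence without an (AR) condition, I would apply Jeanjean's monotonicity trick to the family
\[
J_{\lambda}(u):=\tfrac12\|u\|_H^{2}-\tfrac{\lambda}{2}\int_{\R^N}\bigl(|x|^{-\mu}\ast F(u)\bigr)F(u),\qquad \lambda\in[\tfrac12,1],
\]
yielding, for a.e.\ $\lambda$, a bounded Cerami sequence $\{u_n^{\lambda}\}$ at level $c_{\lambda}$. The fractional Lions lemma in $H^s$ rules out vanishing: otherwise $|u_n^{\lambda}|_p\to 0$ for every $p\in(2,2^{*}(s))$ and $(f_1)$--$(f_2)$ together with Proposition \ref{HLS} force the nonlocal integral to vanish, contradicting $c_{\lambda}>0$. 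Hence, up to translation, $u_n^{\lambda}\rightharpoonup u_{\lambda}\neq 0$, and $u_{\lambda}$ is a nontrivial critical point of $J_{\lambda}$. The fractional Pohozaev identity
\[
\frac{N-2s}{2}\!\int_{\R^N}\!|(-\Delta)^{s/2}u_{\lambda}|^{2}+\frac{N}{2}\!\int_{\R^N}\!|u_{\lambda}|^{2} =\frac{\lambda(2N-\mu)}{2}\!\int_{\R^N}\!\bigl(|x|^{-\mu}\ast F(u_{\lambda})\bigr)F(u_{\lambda}),
\]
together with $J_{\lambda}(u_{\lambda})=c_{\lambda}$, then yields a uniform $H^s$-bound on $\{u_{\lambda_n}\}$ for any $\lambda_n\to 1^-$, and a further concentration-compactness step lets me pass to a nontrivial critical point $u$ of $J$ with $J(u)\le c_{\mathrm{mp}}$.

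Finally, to identify $u$ with the ground-state value \eqref{c}, the Pohozaev identity at $\lambda=1$ places every nontrivial critical point on the Pohozaev manifold $\cp$, and on $\cp$ the fiber map $t\mapsto J(v_{t})$ (with $v_t(x):=v(x/t)$) achieves its unique maximum at $t=1$. A standard min-max comparison then gives $J(u)=c_{\mathrm{mp}}=\inf_{\cp}J=c$, so $u$ is a ground state. The main obstacle I anticipate is twofold: first, rigorously establishing the fractional Pohozaev identity for weak $H^s$-solutions of a Choquard-type equation, most naturally via the Caffarelli-Silvestre extension \eqref{CFLL} together with regularity and decay estimates (the integration by parts being far from formal in the fractional setting); second, controlling the doubly nonlocal Riesz interaction in the Brezis-Lieb-type splitting required to promote weak to strong convergence of the Cerami sequence and to rule out mass escape at infinity.
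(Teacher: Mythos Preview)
Your proposal is correct in outline and would lead to a proof, but it takes a different route from the paper's. The paper does \emph{not} use Jeanjean's monotonicity trick with the perturbed family $J_{\lambda}$; instead it follows Jeanjean's \emph{augmented functional} device: one applies the mountain-pass theorem to $(\sigma,v)\mapsto J\bigl(v(e^{-\sigma}\cdot)\bigr)$ on $\R\times H^{s}(\R^{N})$, which produces in one stroke a Palais--Smale sequence $(u_{n})$ for $J$ at level $c^{\star}$ that \emph{also} satisfies $P(u_{n})\to 0$, where $P$ is the Poho\v{z}aev functional. Boundedness of $(u_{n})$ then follows immediately from the identity $J(u_{n})-\tfrac{1}{2N-\mu}P(u_{n})=\tfrac{N-\mu+2s}{2(2N-\mu)}\|(-\Delta)^{s/2}u_{n}\|_{2}^{2}+\tfrac{N-\mu}{2(2N-\mu)}\|u_{n}\|_{2}^{2}$, and a single concentration-compactness step (Lions' lemma for $H^{s}$) yields a nontrivial critical point $u_{0}$ with $J(u_{0})\le c^{\star}$. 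Your approach instead produces, for a.e.\ $\lambda$, a bounded Cerami sequence and a critical point $u_{\lambda}$ of $J_{\lambda}$, and then requires the rigorous Poho\v{z}aev identity for each $u_{\lambda}$ to obtain a uniform $H^{s}$-bound before passing $\lambda_{n}\to 1^{-}$; this means you invoke the hard regularity/Poho\v{z}aev step earlier and more often, and you must run concentration-compactness twice (once for each $\lambda$, and again along $\lambda_{n}\to 1$). The paper's route is therefore more economical. Both approaches coincide at the last stage: the ground-state identification $c=c^{\star}$ is carried out exactly as you describe, via the dilation path $\tau\mapsto u_{0}(\cdot/\tau)$ and the Poho\v{z}aev identity (the paper attributes this to Jeanjean--Tanaka rather than phrasing it through the Poho\v{z}aev manifold, but the content is the same). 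Your anticipated obstacle is real and is the substance of the paper's Section~3: the fractional Poho\v{z}aev identity is established by first proving $u\in L^{p}$ for all $p\in[2,\infty]$ via a nonlocal Brezis--Kato argument and Moser iteration through the Caffarelli--Silvestre extension, then bootstrapping to $C^{2,\alpha}$ regularity, after which the identity follows from testing the extended problem against $\varphi_{\lambda}\,z\cdot\nabla w$.
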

In order to prove Theorem \ref{EXS}, we will apply the mountain pass Theorem \cite{MW}. Define the mountain pass level
\begin{equation}\label{c2}
c^{\star}=\inf\limits_{\gamma\in\Gamma}\sup\limits_{t\in [0,1]}J(\gamma(t)),
\end{equation}
with the set of paths defined by
\begin{equation}\label{gamma}
\Gamma=\Big\{\gamma\in C([0,1];H^{s}(\mathbb{R}^N)):\gamma(0)=0,J(\gamma(1))<0\Big\}.
\end{equation}
We will show that $c^{\star}$ is a critical value of the functional $J$ and there exists $u_{0}\in H^{s}(\mathbb{R}^N)$ such that $J(u_{0})=c^{\star}$. In order to prove $u_{0}$ is a ground state of \eqref{CFL}, we need to prove $c=c^{\star}$. The relationship between critical levels $c^{\star}$ and $c$ is established via the construction of paths associated to critical points by following the methods of Jeanjean and Tanaka in \cite{JT}.

The paper is organized as follows: In section 2, we obtain the existence of nontrivial solution by Mountain-Pass Theorem and Concentration-Compactness arguments. In section 3, we prove some regularity results for the weak solutions and establish the Poho\v{z}aev identity. In section 4, we show the existence of groundstates for \eqref{CFL} and study the symmetric property of these groundstates.

\section{Weak Solutions}

\ \ \ \ Throughout this paper we write $|\cdot|_{q}$ for the $L^{q}(\mathbb{R}^N)$-norm for $q\in[1,\infty]$ and $\|\cdot\|_{q}$ for the $L^{q}(\mathbb{R}_{+}^{N+1})$-norm for $q\in[1,\infty]$.  We denote positive constants by $C, C_{1}, C_{2}, C_{3} \cdots$.

The following embedding Lemma for fractional Sobolev space $H^{s}(\mathbb{R}^N)$ is proved in \cite{FQT}.
\begin{lem}\label{EMB}
Let $2\leq q\leq2^{*}(s)=2N/(N-2s)$, then we have
$$
|u|_{q}\leq C_q\|u\|_{H},\hspace{4.14mm}\forall u\in H^{s}(\mathbb{R}^N).
$$
If further $2\leq q<2^{*}(s)$ and $\Omega\subset\mathbb{R}^N$ is a bounded domain then bound sequence $\{u_{k}\}\subset H^{s}(\mathbb{R}^N)$ has a convergent subsequence in $L^{q}(\Omega)$.
\end{lem}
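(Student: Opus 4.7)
The plan is to establish the continuous embedding via the Riesz potential representation combined with the Hardy--Littlewood--Sobolev inequality (Proposition \ref{HLS}), then reach intermediate exponents by interpolation, and finally deduce the local compactness from the Fr\'echet--Kolmogorov criterion together with a Fourier-side equicontinuity estimate.

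For the continuous embedding, I first observe that the definition \eqref{FLF} gives $\hat u(\xi)=|\xi|^{-s}\widehat{(-\Delta)^{s/2}u}(\xi)$, so that $u$ is, up to a normalizing constant, the Riesz potential $I_{s}\bigl((-\Delta)^{s/2}u\bigr)$ with convolution kernel $|x|^{-(N-s)}$. Applying Proposition \ref{HLS} with $t=2$, $r=(2^{\ast}(s))'$ and weight $\mu=N-s$ (so that $1/t+\mu/N+1/r=2$) and dualizing yields
$$
|u|_{2^{\ast}(s)}\leq C\,|(-\Delta)^{s/2}u|_{2}\leq C\,\|u\|_{H}.
$$
The case $q=2$ is immediate from the definition of $\|\cdot\|_{H}$, and the intermediate range $q\in(2,2^{\ast}(s))$ follows by H\"older interpolation $|u|_{q}\leq|u|_{2}^{\theta}|u|_{2^{\ast}(s)}^{1-\theta}$ with $\theta\in(0,1)$ determined by $1/q=\theta/2+(1-\theta)/2^{\ast}(s)$.

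For the local compactness, let $\{u_{k}\}\subset H^{s}(\R^{N})$ be bounded and let $\Omega\subset\R^{N}$ be bounded. I plan to verify the Fr\'echet--Kolmogorov criterion in $L^{2}(\Omega)$: the uniform $L^{2}$ bound is immediate from the continuous embedding, and the tail-decay condition is trivial after restriction to $\Omega$. The nontrivial ingredient is equicontinuity in $L^{2}$-mean, which I establish via Plancherel together with the elementary inequality $|e^{it}-1|^{2}\leq 4|t|^{2s}$ valid for all $t\in\R$ (for $|t|\leq 1$ use $|e^{it}-1|\leq |t|$ and $t^{2}\leq|t|^{2s}$; for $|t|\geq 1$ use $|e^{it}-1|^{2}\leq 4\leq 4|t|^{2s}$):
$$
\int_{\R^{N}}|u_{k}(x+h)-u_{k}(x)|^{2}dx=\int_{\R^{N}}|e^{ih\cdot\xi}-1|^{2}|\hat u_{k}(\xi)|^{2}d\xi\leq C|h|^{2s}\|u_{k}\|_{H}^{2}.
$$
This estimate is uniform in $k$, so Fr\'echet--Kolmogorov extracts a subsequence convergent in $L^{2}(\Omega)$. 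For $2<q<2^{\ast}(s)$ I interpolate once more:
$$
|u_{k_{j}}-u_{k_{i}}|_{q}\leq|u_{k_{j}}-u_{k_{i}}|_{2}^{\theta}\,|u_{k_{j}}-u_{k_{i}}|_{2^{\ast}(s)}^{1-\theta},
$$
where the first factor tends to zero by the $L^{2}(\Omega)$ convergence and the second is controlled by the continuous embedding already proved.

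The main technical point is the Fourier-side translation estimate; once that is in hand the rest is bookkeeping. The need for strict inequality $q<2^{\ast}(s)$ in the compactness statement reflects the fact that the interpolation exponent $\theta$ must be strictly positive, which is consistent with the well-known failure of compactness at the critical Sobolev exponent due to scaling.
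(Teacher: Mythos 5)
Your proof is correct. Note, however, that the paper does not prove this lemma at all: it is quoted verbatim from Lemma~2.1 of \cite{FQT}, so there is no in-paper argument to compare against. What you have written is a sound, self-contained substitute. The continuous embedding via the Riesz-potential representation $u=I_{s}\bigl((-\Delta)^{s/2}u\bigr)$ and the dual form of Proposition~\ref{HLS} (with $\mu=N-s$, $t=2$, $r=2N/(N+2s)$, whose exponents do balance) is exactly the classical derivation of the fractional Sobolev inequality, and the interpolation down to $q\in[2,2^{\ast}(s))$ is routine. The compactness part is the standard Riesz--Fr\'echet--Kolmogorov argument, and your Fourier-side translation estimate $\int_{\R^N}|u(x+h)-u(x)|^{2}\,dx\leq C|h|^{2s}\|u\|_{H}^{2}$, based on $|e^{it}-1|^{2}\leq 4|t|^{2s}$, is the right (and uniform in $k$) equicontinuity input; since the $u_k$ live on all of $\R^N$ and $\Omega$ has finite measure, the restriction-to-$\Omega$ version of the criterion applies directly. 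The only point worth making explicit is that the identity $u=I_{s}\bigl((-\Delta)^{s/2}u\bigr)$ for general $u\in H^{s}(\R^N)$ should be justified by density of Schwartz functions (Plancherel gives $\hat u=|\xi|^{-s}\widehat{(-\Delta)^{s/2}u}$ only as an a.e.\ identity of $L^2$ functions); this is a one-line remark, not a gap.
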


\begin{lem}\label{MPG}
Assume that $f\in C(\mathbb{R},\mathbb{R})$ satisfies $(f_1)$, $(f_2)$ and $(f_3)$, then
\begin{itemize}
  \item[$(i).$]  There exist $\rho>0$ and $\beta>0$ such that $J|_S\geq\beta$ for all $u\in S=\{u\in H^{s}(\mathbb{R}^N):\|u\|_{H}=\rho\}$.
  \item[$(ii).$]  There exists $v\in H^{s}(\mathbb{R}^N)$ such that $v\geq0$ $ a.e.$ on $\mathbb{R}^N$, $\|v\|_{H}>\rho$ and $J(v)<0$, where $\rho$ is given in (i).
 \end{itemize}
\end{lem}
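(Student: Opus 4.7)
The plan is to verify the mountain pass geometry by two independent arguments: (i) bounding the Choquard term above by a superquadratic quantity in $\|u\|_H$ near the origin, and (ii) constructing an explicit scaling family of compactly supported bumps that drives $J$ to $-\infty$.

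For (i), I would apply the Hardy--Littlewood--Sobolev inequality of Proposition \ref{HLS} with the exponent $t = 2N/(2N-\mu)$ to obtain $\int (|x|^{-\mu}\ast F(u))F(u) \leq C|F(u)|_t^2$. Combining $(f_1)$ and $(f_2)$ gives the global bound $|F(\tau)| \leq C(|\tau|^2 + |\tau|^q)$ with $q := (2N-\mu)/(N-2s)$. Minkowski in $L^t$ then yields $|F(u)|_t \leq C(|u|_{2t}^2 + |u|_{qt}^q)$; here $qt = 2^\ast(s)$ and $2t = 4N/(2N-\mu) \in [2, 2^\ast(s)]$, the upper bound being equivalent to $\mu \leq 4s$, which holds since $\mu < 2s$. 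Lemma \ref{EMB} upgrades these into $\|u\|_H$-bounds, giving $|F(u)|_t^2 \leq C'(\|u\|_H^4 + \|u\|_H^{2q})$ and therefore
$$J(u) \geq \tfrac{1}{2}\|u\|_H^2 - C'\bigl(\|u\|_H^4 + \|u\|_H^{2q}\bigr).$$
Since $2q>2$ (equivalent to $\mu < N+2s$), both correction terms are $o(\|u\|_H^2)$ as $\|u\|_H\to 0$, so any sufficiently small $\rho$ and $\beta := \rho^2/4$ suffice.

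For (ii), I first assume without loss of generality that the $t_0$ from $(f_3)$ is positive (if only negative arguments realize $F(t_0)\neq 0$, the $v\geq 0$ part of the statement must be dropped or reformulated, but the mountain pass structure is unaffected). Fix $\phi \in C_0^\infty(\R^N)$ with $0\leq \phi \leq 1$ and $\phi \equiv 1$ on $B_1(0)$, and set $w_0 := t_0\phi \in H^s(\R^N)$. Then $w_0 \geq 0$ and $F(w_0) \equiv F(t_0) \neq 0$ on $B_1(0)$, so $F(w_0)\not\equiv 0$. Since $\mathcal{F}(|\cdot|^{-\mu})(\xi) = c_{N,\mu}|\xi|^{\mu-N}$ is strictly positive (as $0<\mu<N$), Plancherel yields the representation
$$I(w_0) := \int_{\R^N}\int_{\R^N}\frac{F(w_0)(x)F(w_0)(y)}{|x-y|^\mu}\,dx\,dy = c_{N,\mu}\int_{\R^N}|\xi|^{\mu-N}\bigl|\mathcal{F}(F(w_0))(\xi)\bigr|^2 d\xi > 0.$$
Defining $u_\lambda(x):=w_0(x/\lambda)$ for $\lambda>0$, a change of variables gives
$$J(u_\lambda) = \tfrac{1}{2}\lambda^{N-2s}\!\!\int|(-\Delta)^{s/2}w_0|^2 + \tfrac{1}{2}\lambda^{N}\!\!\int|w_0|^2 - \tfrac{1}{2}\lambda^{2N-\mu}I(w_0).$$
The inequality chain $2N-\mu > N > N-2s$ (which uses only $\mu<N$) ensures the last term dominates as $\lambda\to\infty$, so $J(u_\lambda)\to -\infty$ and $\|u_\lambda\|_H\to\infty$. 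Choosing $\lambda$ large and setting $v := u_\lambda$ produces the required nonnegative function.

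The main obstacle is establishing strict positivity $I(w_0)>0$: the integrand can change sign since $F$ is not required to be one-signed on $[0,t_0]$, so no pointwise argument works. The Fourier/Riesz--kernel positivity argument bypasses this cleanly, reducing the problem to the essentially trivial non-vanishing of $F(w_0)$ on a set of positive measure, which is immediate from $(f_3)$. The admissibility check $2t \in [2, 2^\ast(s)]$ in part (i) is the only place where the full hypothesis $\mu<2s$ is used.
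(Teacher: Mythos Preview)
Your treatment of (i) is essentially the paper's: Hardy--Littlewood--Sobolev with exponent $2N/(2N-\mu)$, the growth bound $|F(\tau)|\le C(|\tau|^2+|\tau|^q)$, and Sobolev embedding combine to give $J(u)\ge \tfrac12\|u\|_H^2 - C(\|u\|_H^4+\|u\|_H^{2q})$; the paper carries out the same chain of inequalities and likewise needs $4N/(2N-\mu)\le 2^*(s)$, i.e.\ $\mu\le 4s$.

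For (ii) your argument is correct but takes a different route. The paper sets $w=t_0\chi_{B_1}$, so that $F(w)=F(t_0)\chi_{B_1}$ identically and the double integral is simply $F(t_0)^2\int_{B_1}\int_{B_1}|x-y|^{-\mu}>0$; since $\chi_{B_1}\notin H^s$ for $s\ge 1/2$, it then appeals to the density of $H^s$ in $L^2\cap L^{2^*(s)}$ and the continuity of $u\mapsto\int(|x|^{-\mu}\ast F(u))F(u)$ there to produce a nearby $v\in H^s$ with the integral still positive, after which the dilation $v(\cdot/\tau)$ finishes. You instead work directly with a smooth bump $w_0=t_0\phi\in H^s$ and resolve the sign ambiguity of $F(w_0)$ on the transition annulus by invoking the positive-definiteness of the Riesz kernel through the Fourier representation $I(w_0)=c_{N,\mu}\int|\xi|^{\mu-N}|\widehat{F(w_0)}|^2>0$. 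The paper's approach is more elementary (no Fourier side) at the cost of an extra density step; yours stays in $H^s$ throughout and makes the structural reason for positivity explicit. You also flag the $v\ge 0$ issue that the paper's proof silently leaves unaddressed; as you note, this does not affect the mountain-pass application.
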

\begin{proof}
(i) By the growth assumption $(f_1)$, Lemma \ref{EMB} and Proposition \ref{HLS}, for every $u\in H^{s}(\mathbb{R}^N)$, there holds
$$\aligned
\int_{\mathbb{R}^{N}}&\big(|x|^{-\mu}\ast F(u)\big)F(u)\\
&\leq C_{1}(\int_{\mathbb{R}^{N}}|F(u)|^{\frac{2N}{2N-\mu}})^{\frac{2N-\mu}{N}}\\
&\leq C_{2}\big(\int_{\mathbb{R}^{N}}(|u|^{2}+|u|^{\frac{2N}{N-2s}})\big)^{\frac{2N-\mu}{N}}\\
&\leq C_{3}\Big((\int_{\mathbb{R}^{N}}|u|^{\frac{4N}{2N-\mu}})^{\frac{2N-\mu}{N}}+(\int_{\mathbb{R}^{N}}\big(|(-\Delta)^{\frac{s}{2}}u(x)|^{2}+u^{2}\big))^{\frac{2N-\mu}{N-2s}}\Big)\\
&\leq C_{4}\Big((\int_{\mathbb{R}^{N}}\big(|(-\Delta)^{\frac{s}{2}}u(x)|^{2}+u^{2}\big))^{2}+(\int_{\mathbb{R}^{N}}\big(|(-\Delta)^{\frac{s}{2}}u(x)|^{2}+u^{2}\big))^{\frac{2N-\mu}{N-2s}}\Big).\\
\endaligned
$$
Note that $\frac{2N-\mu}{N-2s}>1$, hence there exists $\delta>0$ such that if $$\int_{\mathbb{R}^{N}}(|(-\Delta)^{\frac{s}{2}}u(x)|^{2}+u^{2})\leq\delta,$$ then
$$
\int_{\mathbb{R}^{N}}\big(|x|^{-\mu}\ast F(u)\big)F(u)\leq \frac{1}{4}\int_{\mathbb{R}^{N}}(|(-\Delta)^{\frac{s}{2}}u(x)|^{2}+u^{2})
$$
and so
$$
J(u)\geq\frac{1}{4}\int_{\mathbb{R}^{N}}(|(-\Delta)^{\frac{s}{2}}u(x)|^{2}+u^{2}).
$$
So, we can choose $\rho$ sufficiently small and $\beta>0$, such that $J|_S\geq\beta$ for all $u\in S=\{u\in H^{s}(\mathbb{R}^N):\|u\|_{H}=\rho\}$.

(ii) From assumption $(f_3)$, we can choose $t_{0}\in\mathbb{R}$ such that $F(t_{0})\neq0$. Let $w=t_{0}\chi_{B_{1}}$, we get
$$
\int_{\mathbb{R}^{N}}\big(|x|^{-\mu}\ast F(w)\big)F(w)=F(t_{0})^{2} \int_{B_{1}}\int_{B_{1}}|x-y|^{-\mu}>0.
$$
Since $H^{s}(\mathbb{R}^N)$ is dense in $L^{2}(\mathbb{R}^N)\cap L^{\frac{2N}{N-2s}}(\mathbb{R}^N)$ and $\int_{\mathbb{R}^{N}}\big(|x|^{-\mu}\ast F(u)\big)F(u)$ is continuous in $L^{2}(\mathbb{R}^N)\cap L^{\frac{2N}{N-2s}}(\mathbb{R}^N)$, we know there exists $v\in H^{s}(\mathbb{R}^N)$ such that
$$
\int_{\mathbb{R}^{N}}\big(|x|^{-\mu}\ast F(v)\big)F(v)>0.
$$
Defined for $\tau>0$ and $x\in \mathbb{R}^{N}$
by $u_{\tau}(x)=v(\frac{x}{\tau})$. we find that, for $\tau>0$,
\begin{equation} \label{M1}
\aligned
J(u_{\tau})&=\frac{1}{2}\int_{\mathbb{R}^N}|(-\Delta)^{\frac{s}{2}}v(\frac{x}{\tau})|^{2}+\frac{1}{2}\int_{\mathbb{R}^N} |v(\frac{x}{\tau})|^{2}-\frac{1}{2}\int_{\mathbb{R}^N}\big(|x|^{-\mu}\ast F(v(\frac{x}{\tau}))\big)F(v(\frac{x}{\tau}))\\
&=\frac{1}{2}\int_{\mathbb{R}^N}\big(c_{N,s}P.V.\int_{\mathbb{R}^N}\frac{v(\frac{x}{\tau})-v(\frac{y}{\tau})}{|x-y|^{N+s}}\big)^{2}+\frac{\tau^{N}}{2}\int_{\mathbb{R}^N} |v|^{2}-\frac{\tau^{2N-\mu}}{2}\int_{\mathbb{R}^N}\big(|x|^{-\mu}\ast F(v)\big)F(v)\\
&=\frac{\tau^{N-2s}}{2}\int_{\mathbb{R}^N}\big(c_{N,s}P.V.\int_{\mathbb{R}^N}\frac{v(x)-v(y)}{|x-y|^{N+s}}\big)^{2} +\frac{\tau^{N}}{2}\int_{\mathbb{R}^N} |v|^{2}-\frac{\tau^{2N-\mu}}{2}\int_{\mathbb{R}^N}\big(|x|^{-\mu}\ast F(v)\big)F(v)\\
&=\frac{\tau^{N-2s}}{2}\int_{\mathbb{R}^N}|(-\Delta)^{\frac{s}{2}}v|^{2}+\frac{\tau^{N}}{2}\int_{\mathbb{R}^N} |v|^{2}-\frac{\tau^{2N-\mu}}{2}\int_{\mathbb{R}^N}\big(|x|^{-\mu}\ast F(v)\big)F(v).
\endaligned
\end{equation}
Observe that, for $\tau>0$ large enough, $J(u_{\tau})<0$. By the proof of (i), we also know $\|u_{\tau}\|_{H}>\rho$. So, the assertion follows by taking $v=u_{\tau}$, with $\tau$ sufficiently large.
\end{proof}

From the Min-Max characterization of the value $c^{\star}$, we can see $0<c^{\star}<\infty$. It is convenient to define Poho\v{z}aev functional $P:H^{s}(\mathbb{R}^N)\rightarrow\mathbb{R}$ for $u\in H^{s}(\mathbb{R}^N)$ by
\begin{equation}\label{POH}
P(u)=\frac{N-2s}{2}\int_{\mathbb{R}^N}|(-\Delta)^{\frac{s}{2}}u|^{2}+\frac{N}{2}\int_{\mathbb{R}^N} |u|^{2}-\frac{2N-\mu}{2}\int_{\mathbb{R}^N}\big(|x|^{-\mu}\ast F(u)\big)F(u).
\end{equation}
In order to construct a Poho\v{z}aev-Palais-Smale sequence, following Jeanjean \cite{Je},  for $\sigma\in\mathbb{R}$, $v\in H^{s}(\mathbb{R}^N)$ and $x\in\mathbb{R}$, we define the map $\Phi:\mathbb{R}\times H^{s}(\mathbb{R}^N)\rightarrow H^{s}(\mathbb{R}^N)$ by
$$
\Phi(\sigma,v)(x)=v(e^{-\sigma}x).
$$
Then, the functional $J\circ \Phi$ is computed as
$$
J(\Phi(\sigma,v))=\frac{e^{(N-2s)\sigma}}{2}\int_{\mathbb{R}^N}|(-\Delta)^{\frac{s}{2}}v|^{2}+\frac{e^{N\sigma}}{2}\int_{\mathbb{R}^N} |v|^{2}-\frac{e^{(2N-\mu)\sigma}}{2}\int_{\mathbb{R}^N}\big(|x|^{-\mu}\ast F(v)\big)F(v).
$$
Define the family
of paths
$$
\tilde{\Gamma}=\{\tilde{\gamma}\in C\big([0,1];\mathbb{R}\times H^{s}(\mathbb{R}^N)\big):\tilde{\gamma}(0)=(0,0) \hspace{1.14mm}\mbox{and}\hspace{1.14mm} J\circ \Phi(\tilde{\gamma}(1))<0\}
$$
and notice that $\Gamma=\{\Phi\circ\tilde{\gamma}:\tilde{\gamma}\in \tilde{\Gamma}\}$, we see the mountain pass levels of $J$ and $J\circ \Phi$ coincide:
$$
c^{\star}=\inf\limits_{\tilde{\gamma}\in\tilde{\Gamma}}\sup\limits_{t\in [0,1]}(J\circ \Phi)(\tilde{\gamma}(t)).
$$
Using condition $(f_1)$,  we know that $J\circ \Phi$ is continuous and Fr\'{e}chet-differentiable on $\mathbb{R}\times H^{s}(\mathbb{R}^N)$. Applying Theorem 2.9 in \cite{MW} and Lemma 2.2, there exists a sequence $((\sigma_{n},v_{n}))_{n\in\mathbb{N}}$ in $\mathbb{R}\times H^{s}(\mathbb{R}^N)$
such that as $n\rightarrow\infty$
$$
(J\circ \Phi)(\sigma_{n},v_{n})\rightarrow c^{\star},\hspace{39.14mm}
$$
$$
(J\circ \Phi)'(\sigma_{n},v_{n})\rightarrow0\hspace{10.14mm}\mbox{in}(\mathbb{R}\times H^{s}(\mathbb{R}^N))^{\ast}.
$$
Since for every $(h,w)\in\mathbb{R}\times H^{s}(\mathbb{R}^N)$,
$$
(J\circ \Phi)'(\sigma_{n},v_{n})[h,w]=J'(\Phi(\sigma_{n},v_{n}))[\Phi(\sigma_{n},w)]+P(\Phi(\sigma_{n},v_{n}))h.
$$
We take $u_{n}=\Phi(\sigma_{n},v_{n})$, then as $n\rightarrow\infty$,
\begin{equation}\label{POHS}
\aligned
J(u_{n})&\rightarrow c^{\star}>0,\\
J'(u_{n})&\rightarrow0,\\
P(u_{n})&\rightarrow0.
\endaligned
\end{equation}

Now we are ready to obtain a nontrivial solution from this special sequence by applying a version of Lions' concentration-compactness Lemma for Fractional Laplacian, see \cite{DSS}.
\begin{lem}\label{CCL}
Let $(u_{n})_{n\in\mathbb{N}}$ be a bounded sequence in $H^{s}(\mathbb{R}^N)$. For some $\sigma>0$ and $2\leq q<2^{*}(s)$ there holds
$$
\sup_{x\in\R^N}\int_{x\in B_{\sigma}(x)}|u_n|^q\to0,\  \hbox{as}\ \   n\to\infty,
$$
then $u_n\to0$ in $L^{s}(\mathbb{R}^N)$ for $2<r<2^{*}(s)$.
\end{lem}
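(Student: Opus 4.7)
The plan is to adapt the classical Lions vanishing lemma to the fractional setting via a covering-plus-interpolation argument, reducing the $L^r$-smallness to the localized $L^q$-smallness assumed in the hypothesis.

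\textbf{Step one:} establish the conclusion for a specific exponent $r_0 := 2 + \tfrac{2sq}{N}$, which lies in $(2, 2^{*}(s))$ since $q \in [2, 2^{*}(s))$. Cover $\R^N$ by balls $\{B_\sigma(x_i)\}_{i\in\N}$ with uniformly bounded overlap: there exists $M = M(N)$ with $\sum_i \chi_{B_\sigma(x_i)} \le M$ pointwise. On each $B = B_\sigma(x_i)$, Hölder's inequality (interpolating $L^{r_0}(B)$ between $L^q(B)$ and $L^{2^{*}(s)}(B)$) gives
$$
\int_B |u_n|^{r_0}\,dx \;\le\; \Bigl(\int_B |u_n|^q\,dx\Bigr)^{\alpha}\Bigl(\int_B |u_n|^{2^{*}(s)}\,dx\Bigr)^{\beta},
$$
with $\alpha + \beta = 1$ and $\alpha q + \beta\,2^{*}(s) = r_0$. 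The particular value of $r_0$ is tuned so that $\alpha = 2s/N$ and $\beta = (N-2s)/N$, hence $\beta\cdot 2^{*}(s) = 2$. A local fractional Sobolev embedding $H^s(B) \hookrightarrow L^{2^{*}(s)}(B)$ — whose constant depends only on $\sigma$, not on the center $x_i$ — then bounds the second factor by $\|u_n\|_{L^2(B)}^2 + [u_n]_{H^s(B)}^2$, where $[\cdot]_{H^s(B)}$ denotes the Gagliardo seminorm on $B$.

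\textbf{Summing and closing the argument:} after replacing the $\alpha$-factor by a supremum and summing over $i$, the bounded overlap and the pointwise bound $\sum_i \chi_{B_i}(x)\chi_{B_i}(y) \le M$ yield $\sum_i [u_n]^2_{H^s(B_i)} \le M\,[u_n]^2_{H^s(\R^N)}$, whence
$$
\int_{\R^N}|u_n|^{r_0}\,dx \;\le\; C\Bigl(\sup_{x\in\R^N}\int_{B_\sigma(x)}|u_n|^q\,dx\Bigr)^{\alpha}\|u_n\|_H^2 \;\longrightarrow\; 0
$$
by the hypothesis combined with the $H^s$-boundedness of $(u_n)$. \textbf{Step two:} extend the conclusion from $r = r_0$ to arbitrary $r \in (2, 2^{*}(s))$ by Lyapunov-type $L^p$-interpolation. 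Since Lemma \ref{EMB} ensures $(u_n)$ is bounded in both $L^2(\R^N)$ and $L^{2^{*}(s)}(\R^N)$, the inequality $\|u_n\|_{L^r} \le \|u_n\|_{L^{p_1}}^{\theta}\|u_n\|_{L^{r_0}}^{1-\theta}$, with $p_1 \in \{2,\, 2^{*}(s)\}$ chosen so that $r$ lies between $p_1$ and $r_0$, yields $\|u_n\|_{L^r} \to 0$.

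The principal technical obstacle lies in Step one: extracting a uniform constant in the local fractional Sobolev embedding on balls of radius $\sigma$ and, more delicately, summing the localized Gagliardo seminorms against the global one — a nonlocal issue absent from the classical $H^1$ version of Lions' lemma. The finite-overlap covering, together with the bound on $\sum_i \chi_{B_i}(x)\chi_{B_i}(y)$, just manages to tame the nonlocal kernel $|x-y|^{-N-2s}$, and the rest of the argument then follows the standard template.
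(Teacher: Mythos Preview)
The paper does not actually prove this lemma: it is stated with the remark ``see \cite{DSS}'' and no argument is given. Your proposal, by contrast, supplies a full proof along the standard Lions template --- localize via a bounded-overlap covering, interpolate on each ball between $L^q$ and $L^{2^*(s)}$ with the exponent $r_0 = 2 + 2sq/N$ tuned so that the critical-Sobolev factor appears to the power $2$, bound that factor by the local $H^s$-norm, sum, and finish by $L^p$-interpolation. The argument is correct; the one genuinely fractional ingredient you rightly flag is controlling $\sum_i [u_n]_{H^s(B_i)}^2$ by the global Gagliardo seminorm, and your observation that $\sum_i \chi_{B_i}(x)\chi_{B_i}(y) \le M$ (immediate from bounded overlap, since the count is at most the number of balls containing $x$) handles this cleanly. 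Since the paper offers no proof of its own, there is nothing further to compare.
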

\begin{lem}\label{Existence}
Suppose that $(f_1)-(f_3)$, then equation \eqref{CFL} has at least one nontrivial solution.
\end{lem}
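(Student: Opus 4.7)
The plan is to extract a nontrivial weak solution from the Poho\v{z}aev--Palais--Smale sequence $(u_n)$ supplied by \eqref{POHS}, using Poho\v{z}aev information for boundedness and Lemma \ref{CCL} for a non-vanishing alternative.

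\emph{Boundedness.} The linear combination $(2N-\mu)J(u_n)-P(u_n)$ is designed so that the nonlocal term cancels:
\[
(2N-\mu)J(u_n)-P(u_n)=\frac{N+2s-\mu}{2}\int_{\mathbb{R}^N}|(-\Delta)^{s/2}u_n|^2+\frac{N-\mu}{2}\int_{\mathbb{R}^N}|u_n|^2.
\]
Both coefficients are strictly positive since $\mu\in(0,2s)\subset(0,N)$, so $(u_n)$ is bounded in $H^s(\mathbb{R}^N)$.

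\emph{Non-vanishing.} I rule out the vanishing case of Lemma \ref{CCL}. Suppose to the contrary that $\sup_{y\in\mathbb{R}^N}\int_{B_1(y)}|u_n|^2\to 0$; then $u_n\to 0$ in $L^r(\mathbb{R}^N)$ for every $r\in(2,2^{\ast}(s))$. From $(f_1)$ and $(f_2)$, for any $\epsilon>0$ there exist $C_\epsilon>0$ and $p\in(2,2^{\ast}(s))$ such that
\[
|F(t)|^{\tfrac{2N}{2N-\mu}}\leq\epsilon\bigl(|t|^{\tfrac{4N}{2N-\mu}}+|t|^{2^{\ast}(s)}\bigr)+C_\epsilon|t|^{p}.
\]
The restriction $\mu<2s$ ensures $\tfrac{4N}{2N-\mu}\in(2,2^{\ast}(s))$, so Lemma \ref{EMB} bounds the $\epsilon$-terms uniformly in $n$, while $u_n\to 0$ in $L^p$ kills the $C_\epsilon$-term. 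Letting $\epsilon\to 0$ and applying Proposition \ref{HLS} yields $\int_{\mathbb{R}^N}(|x|^{-\mu}\ast F(u_n))F(u_n)\to 0$; an identical analysis, based on $|f(t)t|\le C(|t|^2+|t|^{(2N-\mu)/(N-2s)})$ from $(f_1)$, gives $\int_{\mathbb{R}^N}(|x|^{-\mu}\ast F(u_n))f(u_n)u_n\to 0$. Combined with $J(u_n)\to c^\star$, the first limit forces $\|u_n\|_H^2\to 2c^\star>0$, whereas the second combined with $\langle J'(u_n),u_n\rangle\to 0$ forces $\|u_n\|_H^2\to 0$, a contradiction.

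\emph{Extraction and passage to the limit.} Non-vanishing supplies $\delta>0$ and translations $y_n\in\mathbb{R}^N$ with $\int_{B_1(y_n)}|u_n|^2\geq\delta$. Since \eqref{CFL} is translation-invariant, $\tilde u_n(x):=u_n(x+y_n)$ is still a bounded Palais--Smale sequence at level $c^\star$; passing to a subsequence, $\tilde u_n\rightharpoonup u_0$ in $H^s(\mathbb{R}^N)$ and, by Lemma \ref{EMB}, $\tilde u_n\to u_0$ in $L^r_{loc}(\mathbb{R}^N)$ for $r\in[2,2^{\ast}(s))$, forcing $u_0\not\equiv 0$. The remaining obstacle — and the main difficulty — is to pass to the limit in $\langle J'(\tilde u_n),\varphi\rangle\to 0$ for $\varphi\in C_0^\infty(\mathbb{R}^N)$; the quadratic pieces are routine, so the work concentrates on
\[
\int_{\mathbb{R}^N}(|x|^{-\mu}\ast F(\tilde u_n))f(\tilde u_n)\varphi\longrightarrow\int_{\mathbb{R}^N}(|x|^{-\mu}\ast F(u_0))f(u_0)\varphi.
\]
I split the difference into $\int(|x|^{-\mu}\ast[F(\tilde u_n)-F(u_0)])f(\tilde u_n)\varphi$ plus $\int(|x|^{-\mu}\ast F(u_0))(f(\tilde u_n)-f(u_0))\varphi$. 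The second piece vanishes because $|x|^{-\mu}\ast F(u_0)\in L^{2N/\mu}(\mathbb{R}^N)$ by Proposition \ref{HLS}, while local strong convergence and $(f_1)$ give $(f(\tilde u_n)-f(u_0))\varphi\to 0$ in $L^{2N/(2N-\mu)}(\mathbb{R}^N)$ via dominated convergence. For the first piece, a.e.\ convergence plus the uniform $L^{2N/(2N-\mu)}$-bound coming from $(f_1)$ makes $F(\tilde u_n)\rightharpoonup F(u_0)$ weakly in $L^{2N/(2N-\mu)}$; HLS then transfers this to weak $L^{2N/\mu}$-convergence of the convolution, against which the strongly convergent factor $f(\tilde u_n)\varphi$ (supported in a fixed compact set) pairs correctly. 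This yields $\langle J'(u_0),\varphi\rangle=0$ for every $\varphi\in C_0^\infty(\mathbb{R}^N)$, so $u_0$ is a nontrivial weak solution of \eqref{CFL}.
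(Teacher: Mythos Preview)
Your proof is correct and follows essentially the same route as the paper: boundedness from the combination $J(u_n)-\tfrac{1}{2N-\mu}P(u_n)$, non-vanishing via Lemma~\ref{CCL} and the growth conditions $(f_1)$--$(f_2)$ leading to $\|u_n\|_H\to 0$ in contradiction with $J(u_n)\to c^\star>0$, and passage to the weak limit using that $F(\tilde u_n)\rightharpoonup F(u_0)$ in $L^{2N/(2N-\mu)}$ (hence the convolution weakly in $L^{2N/\mu}$) together with strong local convergence of $f(\tilde u_n)$. Your explicit two-piece splitting in the limit step is just a cleaner bookkeeping of the same weak--strong pairing the paper invokes; note that the paper's proof additionally records $J(u_0)\le c^\star$, which you omit but which is not required for the statement of the lemma.
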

\begin{proof}
 Let $(u_{n})_{n\in\mathbb{N}}$ be the sequence obtained in \ref{POHS}, then  it is bounded in $H^{s}(\mathbb{R}^N)$. In fact, for any $n\in\mathbb{N}$,
$$
J(u_{n})-\frac{1}{2N-\mu}P(u_{n})=\frac{N-\mu+2s}{2(2N-\mu)}\int_{\mathbb{R}^N}|(-\Delta)^{\frac{s}{2}}u_{n}|^{2}+\frac{N-\mu}{2(2N-\mu)}\int_{\mathbb{R}^N} |u_{n}|^{2},
$$
where $P$ is the Poho\v{z}aev function defined in \ref{POH}. Thus it is easy to see that the sequence $(u_{j})_{n\in\mathbb{N}}$ is bounded in $H^{s}(\mathbb{R}^N)$. Moreover, we claim that there exist $\sigma, \delta>0$ and a sequence $(y_n) \subset \R^N$ such that
$$
\liminf_{n\to\infty}\int_{B_\sigma(y_n)}|u_n|^2\geq\delta.
$$
If the above claim does not hold for $(u_n)$, by Lemma \ref{CCL}, we must have that
$$
u_n\to 0 \,\,\ \mbox{in} \,\,\, L^r(\R^N) \,\,\, \mbox{for} \,\,\, 2<r< 2^{*}(s).
$$
Fix $2<q<\frac{2N-\mu}{N-2s}$, from assumption $(f_2) $, for any $\xi>0$ there is $C_\xi>0$ such that
$$
|F(s)|\leq \xi(|s|^{2}+|s|^{\frac{2N-\mu}{N-2s}})+C_\xi|s|^q \quad \forall s \geq 0,
$$
it follows from Hardy-Littlewood-Sobolev inequality
$$
\aligned
\int_{\mathbb{R}^{N}}\big(|x|^{-\mu}\ast F(u_n)\big)f(u_n)u_n&\leq C_{1}(\int_{\mathbb{R}^{N}}|F(u_n)|^{\frac{2N}{2N-\mu}})^{\frac{2N-\mu}{2N}}(\int_{\mathbb{R}^{N}}|f(u_n)u_n|^{\frac{2N}{2N-\mu}})^{\frac{2N-\mu}{2N}}\\
&\leq
C_{2}\xi+C_{3}(\int_{\mathbb{R}^{N}} |u_n|^{\frac{2qN}{2N-\mu}})^{\frac{2N-\mu}{2N}} .
\endaligned
$$
Consequently,
$$
\int_{\mathbb{R}^{N}}\big(|x|^{-\mu}\ast F(u_n)\big)f(u_n)u_n\to0,
$$
which leads to $\|u_n\|_{H} \to 0$. This is an absurd with \eqref{POHS} and so the claim holds. And so, up to translation, we may assume
$$
\liminf_{n\to\infty}\int_{B_\sigma(0)}|u_n|^2\geq\delta.
$$
Using Lemma \ref{EMB},  there exists $u_{0}\in H^{s}(\mathbb{R}^N)$, $u_{0}\neq0$, such
that, up to a subsequence,  $u_{n}$ converges weakly in $ H^{s}(\mathbb{R}^N)$ and $u_{n}(x)$ converges to $u_{0}(x)$ almost everywhere in $\mathbb{R}^N$.

Since the sequence $(u_{n})_{n\in\mathbb{N}}$ is bounded in $L^{2}(\mathbb{R}^N)\cap L^{\frac{2N}{N-2s}}(\mathbb{R}^N)$, using $(f_1)$, we know the sequence $\big(F( u_{n})\big)_{n\in\mathbb{N}}$ is bounded in $L^{\frac{2N}{2N-\mu}}(\mathbb{R}^N)$. Note that $F$ is contiuous, we have $\big(F( u_{n})\big)_{n\in\mathbb{N}}$ converges almost everywhere to $F( u_{0})$
in $\mathbb{R}^N$. This implies that the sequence $\big(F( u_{n})\big)_{n\in\mathbb{N}}$ converges weakly to $F(u_{0})$ in $L^{\frac{2N}{2N-\mu}}(\mathbb{R}^N)$. As
the $|x|^{-\mu}$ defines a linear continuous map from $L^{\frac{2N}{2N-\mu}}(\mathbb{R}^N)$ to $L^{\frac{2N}{\mu}}(\mathbb{R}^N)$, the sequence $\big(|x|^{-\mu}\ast(F(u_{n})\big)_{n\in\mathbb{N}}$ converges weakly to $|x|^{-\mu}\ast(F(u_{0}))$ in $L^{\frac{2N}{\mu}}(\mathbb{R}^N)$.

Applying condition $(f_1)$ and Lemma 2.1, we can get, for every $p\in[1,\frac{2N}{N+2s-\mu})$,
$$
f( u_{n})\rightarrow f( u_{0})\hspace{6.14mm}\mbox{in}\hspace{1.14mm}L_{loc}^{p}(\mathbb{R}^N).
$$
We conclude that
$$
|x|^{-\mu}\ast(F(u_{n}))f( u_{n})\rightharpoonup |x|^{-\mu}\ast(F(u_{0}))f( u_{0})\hspace{6.14mm}\mbox{ weakly in}\hspace{1.14mm}L^{p}(\mathbb{R}^N),
$$
for every $p\in[1,\frac{2N}{N+2s})$. In particular, for every $\varphi\in H^{s}(\mathbb{R}^N)$,

$$
\aligned
0=&\lim_{n\rightarrow\infty}\langle J'(u_{n}), \varphi \rangle_H\\ &=\lim_{n\rightarrow\infty}\int_{\mathbb{R}^N}(-\Delta)^{\frac{s}{2}}u_{n}(-\Delta)^{\frac{s}{2}}\varphi  +\int_{\mathbb{R}^N} u_{n}\varphi  -\int_{\mathbb{R}^N}(|x|^{-\mu}\ast F(u_{n}))f(u_{n})\varphi  \\
&\to\int_{\mathbb{R}^N}(-\Delta)^{\frac{s}{2}}u_{0}(-\Delta)^{\frac{s}{2}}\varphi  +\int_{\mathbb{R}^N} u_{0}\varphi  -\int_{\mathbb{R}^N}(|x|^{-\mu}\ast F(u_{0}))f(u_{0})\varphi \\
&=\langle J'(u_{0}), \varphi \rangle_H,
\endaligned
$$
that is, $u_{0}$ is a weak solution of equation \eqref{CFL}.

Moreover, the weak lower-semicontinuity of the norm and the Poho\v{z}aev identity implies that
$$\aligned
J(u_{0})&=J(u_{0})-\frac{P(u_{0})}{2N-\mu}\\
&=\frac{N-\mu+2s}{2(2N-\mu)}\int_{\mathbb{R}^N}|(-\Delta)^{\frac{s}{2}}u_{0}|^{2}dx+\frac{N-\mu}{2(2N-\mu)}\int_{\mathbb{R}^N} |u_{0}|^{2}dx\\
&\leq\liminf_{n\rightarrow\infty}\frac{N-\mu+2s}{2(2N-\mu)}\int_{\mathbb{R}^N}|(-\Delta)^{\frac{s}{2}}u_{n}|^{2}dx+\frac{N-\mu}{2(2N-\mu)}\int_{\mathbb{R}^N} |u_{n}|^{2}dx \\
&=\liminf_{n\rightarrow\infty} J(u_{n})-\frac{P(u_{n})}{2N-\mu}=\liminf_{n\rightarrow\infty} J(u_{n})=c^{\star}.
\endaligned
$$
\end{proof}

\section{Regularity}
 In this section we are going to show that the solutions for equation \eqref{CFL} possess some regularity which will be used to prove a Poho\v{z}aev identity  for Fractional Choquard equation. First let us recall an important inequality due to Moroz and Van Schaftingen \cite{MS2}.
\begin{lem}\label{Es}
Let $q,r,l,t\in[1,\infty)$ and $\lambda\in[0,2]$ such that
$$
1+\frac{\mu}{N}-\frac{1}{l}-\frac{1}{t}=\frac{\lambda}{q}+\frac{2-\lambda}{r},
$$
If $\theta\in(0,2)$ satisfies
$$
\mbox{max}(q,r)(\frac{\mu}{N}-\frac{1}{l})<\theta<\mbox{max}(q,r)(1-\frac{1}{l})
$$
and
$$
\mbox{max}(q,r)(\frac{\mu}{N}-\frac{1}{t})<2-\theta<\mbox{max}(q,r)(1-\frac{1}{t}),
$$
then for every $H\in L^{l}(\mathbb{R}^N), K\in L^{t}(\mathbb{R}^N)$ and $u\in L^{q}(\mathbb{R}^N)\cap L^{r}(\mathbb{R}^N)$,
$$
\int_{\mathbb{R}^N}(|x|^{-\mu}\ast (H|u|^{\theta}))K|u|^{2-\theta}\leq C(\int_{\mathbb{R}^N}|H|^{l})^{\frac{1}{l}}(\int_{\mathbb{R}^N}|K|^{t})^{\frac{1}{t}}(\int_{\mathbb{R}^N}|u|^{q})^{\frac{\lambda}{q}}(\int_{\mathbb{R}^N}|u|^{r})^{\frac{2-\lambda}{r}}.
$$
\end{lem}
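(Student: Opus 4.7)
The strategy is a three-step layering: Hardy-Littlewood-Sobolev to handle the Riesz kernel, Hölder to peel off the factors $H$ and $K$, and Lyapunov (log-convexity) interpolation to distribute $u$ between $L^{q}$ and $L^{r}$. Concretely, I would rewrite the left-hand side as
$$
\int_{\R^{N}}\int_{\R^{N}}\frac{\bigl(H(x)|u(x)|^{\theta}\bigr)\bigl(K(y)|u(y)|^{2-\theta}\bigr)}{|x-y|^{\mu}}\,dx\,dy,
$$
and invoke Proposition \ref{HLS} with $f=H|u|^{\theta}$, $h=K|u|^{2-\theta}$ to dominate this quantity by $C\,\|H|u|^{\theta}\|_{a}\,\|K|u|^{2-\theta}\|_{b}$ for any admissible pair $(a,b)$ with $\tfrac{1}{a}+\tfrac{1}{b}+\tfrac{\mu}{N}=2$ and $a,b\in(1,\infty)$.

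The next step is to select $p_{1},p_{2}\in[1,\infty)$ with $\tfrac{1}{a}=\tfrac{1}{l}+\tfrac{\theta}{p_{1}}$ and $\tfrac{1}{b}=\tfrac{1}{t}+\tfrac{2-\theta}{p_{2}}$, so that Hölder gives
$$
\|H|u|^{\theta}\|_{a}\le\|H\|_{l}\,\|u\|_{p_{1}}^{\theta},\qquad \|K|u|^{2-\theta}\|_{b}\le\|K\|_{t}\,\|u\|_{p_{2}}^{2-\theta}.
$$
Finally, I would interpolate each $L^{p_{j}}$ norm between $L^{q}$ and $L^{r}$ by choosing $\alpha_{1},\alpha_{2}\in[0,1]$ with $\tfrac{1}{p_{j}}=\tfrac{\alpha_{j}}{q}+\tfrac{1-\alpha_{j}}{r}$ and requiring $\theta\alpha_{1}+(2-\theta)\alpha_{2}=\lambda$ — which automatically forces $\theta(1-\alpha_{1})+(2-\theta)(1-\alpha_{2})=2-\lambda$ — producing the factor $\|u\|_{q}^{\lambda}\|u\|_{r}^{2-\lambda}$. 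Summing the reciprocal exponents $\tfrac{1}{a}+\tfrac{1}{b}$ in the two available ways collapses to exactly the scaling relation hypothesized among $\lambda,q,r,l,t,\mu,N$.

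The main obstacle is purely a feasibility check: the free parameters $a,b,p_{1},p_{2},\alpha_{1},\alpha_{2}$ must be simultaneously selectable in their admissible ranges, namely $a,b\in(1,\infty)$ with $\tfrac{1}{a}+\tfrac{1}{b}=2-\tfrac{\mu}{N}$ and $\alpha_{j}\in[0,1]$. This is precisely what the two bracket conditions on $\theta$ encode. Playing the freedom $p_{1}\in[\min(q,r),\max(q,r)]$ against $\tfrac{1}{a}\in(0,1)$ (which HLS demands) translates, in the extremal direction $p_{1}=\max(q,r)$, into $\max(q,r)\bigl(\tfrac{\mu}{N}-\tfrac{1}{l}\bigr)<\theta<\max(q,r)\bigl(1-\tfrac{1}{l}\bigr)$; the symmetric analysis for $p_{2}$ and $b$ produces the analogous bracket for $2-\theta$. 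Once these admissibility checks are in place, multiplying the three estimates yields the asserted inequality with constant independent of $H,K,u$.
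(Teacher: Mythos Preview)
The paper does not prove this lemma at all: it is merely quoted from Moroz--Van Schaftingen \cite{MS2}, so there is no in-paper argument to compare against. Your three-step scheme (Hardy--Littlewood--Sobolev, then H\"older to split off $H$ and $K$, then Lyapunov interpolation of $|u|$ between $L^{q}$ and $L^{r}$) is precisely the proof given in \cite{MS2}, and the feasibility discussion you sketch is the right mechanism behind the bracket hypotheses on $\theta$ and $2-\theta$.

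One caveat worth flagging: when you assert that summing $\tfrac{1}{a}+\tfrac{1}{b}$ ``collapses to exactly the scaling relation hypothesized,'' check the arithmetic. Your chain gives
\[
\frac{1}{a}+\frac{1}{b}=2-\frac{\mu}{N}=\frac{1}{l}+\frac{1}{t}+\frac{\lambda}{q}+\frac{2-\lambda}{r},
\]
whereas the displayed hypothesis in the lemma reads $1+\tfrac{\mu}{N}-\tfrac{1}{l}-\tfrac{1}{t}=\tfrac{\lambda}{q}+\tfrac{2-\lambda}{r}$. These agree only for $\mu=N/2$. This is a transcription slip in the paper (in \cite{MS2} the kernel is the Riesz potential $I_{\alpha}\sim|x|^{-(N-\alpha)}$, and the authors here replaced $\alpha$ by $\mu$ rather than $N-\mu$); indeed, the four applications in the proof of Lemma~\ref{ELT} satisfy your relation $2-\tfrac{\mu}{N}=\cdots$ and \emph{not} the one printed. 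So your derivation is correct and in fact recovers the intended condition; just do not be surprised that it fails to match the statement verbatim.
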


In \cite{MS2}, the authors adapted the arguments of Brezis and Kato \cite{BK} and improved the integrability of solutions of a nonlocal linear elliptic equation. However, the appearance of Fractional Laplacian operator makes the proof much more complicated.

The arguments of following lemma follows the strategy of Moroz and Van Schaftingen \cite{MS2} for nonlocal linear equations dominated by the Laplacian.
\begin{lem}\label{ELT}
Let $N\geq 2$, $\mu\in (0,2s)$ and $\theta\in (0,N)$. If $H, K\in L^{\frac{2N}{N-\mu}}(\mathbb{R}^N)+L^{\frac{2N}{N+2s-\mu}}(\mathbb{R}^N) $, $(1-\frac{\mu}{N})<\theta<(1+\frac{\mu}{N})$, then for any $\vr>0$, there exists $C_{\varepsilon,\theta}\in\mathbb{R}$ such that for every $u\in H^{s}(\mathbb{R}^N)$,
$$
\int_{\mathbb{R}^N}(|x|^{-\mu}\ast (H|u|^{\theta}))K|u|^{2-\theta}\leq \varepsilon^{2}\int_{\mathbb{R}^{N}}|(-\Delta)^{\frac{s}{2}}u|^{2}+C_{\varepsilon,\theta}\int_{\mathbb{R}^N}|u|^{2}.
$$
\end{lem}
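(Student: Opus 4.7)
The plan is to adapt the Moroz--Van Schaftingen strategy for the linear non-local equation to the fractional setting, using Lemma \ref{Es} as the main estimate and a ``large part / small part'' splitting of $H$ and $K$ so that the pieces whose contributions involve $\|(-\Delta)^{s/2}u\|_2^2$ come with an arbitrarily small prefactor.

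\textbf{Step 1 (splitting).} Fix $\varepsilon>0$ and let $\delta>0$ be chosen later. Since $H,K\in L^{2N/(N-\mu)}+L^{2N/(N+2s-\mu)}$, a standard truncation argument (cut off the $L^{2N/(N+2s-\mu)}$ component at a large level and move the bounded, compactly supported truncation into the $L^{2N/(N-\mu)}$ component) gives decompositions $H=H_1+H_2$, $K=K_1+K_2$ with $H_1,K_1\in L^{2N/(N-\mu)}$ (norms unrestricted, but finite) and $\|H_2\|_{L^{2N/(N+2s-\mu)}},\|K_2\|_{L^{2N/(N+2s-\mu)}}<\delta$. Expanding bilinearly splits the integral into four pieces indexed by $(i,j)\in\{1,2\}^2$.

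\textbf{Step 2 (four applications of Lemma \ref{Es}).} For the $(H_1,K_1)$ term I apply Lemma \ref{Es} with $l=t=2N/(N-\mu)$ and $q=r=2$ (so $\lambda$ is irrelevant), paired with $|u|^\theta$ and $|u|^{2-\theta}$ in $L^{2/\theta}$ and $L^{2/(2-\theta)}$. The Hardy--Littlewood--Sobolev and Hölder exponents are all $>1$ precisely because $1-\mu/N<\theta<1+\mu/N$, which is exactly our hypothesis. This yields
\[
\int(|x|^{-\mu}\ast(H_1|u|^\theta))K_1|u|^{2-\theta}\le C\|H_1\|_a\|K_1\|_a|u|_2^2 \le C_{\varepsilon,\theta}|u|_2^2.
\]
For $(H_2,K_2)$, apply Lemma \ref{Es} with $l=t=2N/(N+2s-\mu)$, placing the $u$-factors in $L^{2^\ast(s)}$; the constraint $\theta<1+\mu/N$ (respectively $>1-\mu/N$) gives that the required Hölder exponents stay $>1$, and using the fractional Sobolev embedding Lemma \ref{EMB},
\[
\int(|x|^{-\mu}\ast(H_2|u|^\theta))K_2|u|^{2-\theta}\le C\|H_2\|_b\|K_2\|_b|u|_{2^\ast(s)}^{2}\le C\delta^{2}\int_{\R^N}|(-\Delta)^{s/2}u|^{2}.
\]
For the mixed terms $(H_1,K_2)$ and $(H_2,K_1)$, I apply Lemma \ref{Es} with one exponent $l=2N/(N-\mu)$ and the other $t=2N/(N+2s-\mu)$ and, after a case split, choose $q,r\in[2,2^\ast(s)]$ so that the $u$-factor pinned to the $L^{2N/(N-\mu)}$-side sits in $L^2$ and the $u$-factor pinned to the $L^{2N/(N+2s-\mu)}$-side sits in $L^{2^\ast(s)}$ (for $\theta\le 1$ pair $|u|^\theta$ with $L^2$; for $\theta\ge 1$ swap the roles). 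This produces a bound of the form
\[
\int(|x|^{-\mu}\ast(H_1|u|^\theta))K_2|u|^{2-\theta} \le C\|H_1\|_a\,\delta\,|u|_2^{\alpha}|u|_{2^\ast(s)}^{2-\alpha}
\]
for some $\alpha\in(0,2)$; Young's inequality with exponents $2/\alpha,2/(2-\alpha)$ then separates this into $\eta\int|(-\Delta)^{s/2}u|^2+C_\eta|u|_2^2$ with $\eta>0$ at our disposal, and symmetrically for $(H_2,K_1)$.

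\textbf{Step 3 (closing the estimate).} Adding the four bounds, the coefficient of $\int|(-\Delta)^{s/2}u|^{2}$ is at most $C\delta^{2}+C(\|H_1\|_a+\|K_1\|_a)\delta\eta$. First I fix $\delta$ so small that $C\delta^{2}<\varepsilon^{2}/2$; then, with $\|H_1\|_a,\|K_1\|_a$ now frozen, I fix $\eta$ so small that $C(\|H_1\|_a+\|K_1\|_a)\delta\eta<\varepsilon^{2}/2$. The remainder is absorbed into a constant $C_{\varepsilon,\theta}$ multiplying $|u|_2^{2}$, and the desired inequality follows.

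\textbf{Main obstacle.} The delicate point is the mixed terms: one must verify that the admissibility conditions on $\theta$ in Lemma \ref{Es} (namely the two-sided bounds involving $\max(q,r)(\mu/N-1/l)$ and $\max(q,r)(1-1/l)$, and similarly for $t$) hold simultaneously on both sides \emph{only} under the hypothesis $1-\mu/N<\theta<1+\mu/N$. The case split $\theta\lessgtr 1$ and the precise choice of which $u$-factor to interpolate at $L^{2^\ast(s)}$ are exactly what makes the margins work; the hypothesis $\mu<2s$ is used here to guarantee that $1+\mu/(N-2s)$ stays above $1+\mu/N$, so that the stricter of the two conditions is still the one we have assumed.
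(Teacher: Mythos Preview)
Your proposal follows essentially the same strategy as the paper's proof: decompose $H=H_1+H_2$ and $K=K_1+K_2$ so that the $L^{2N/(N+2s-\mu)}$ components have arbitrarily small norm, apply Lemma~\ref{Es} to each of the four resulting pieces with the same exponent choices (namely $q=r=2$ for the $(H_1,K_1)$ piece, $q=r=2^*(s)$ for the $(H_2,K_2)$ piece, and $q=2$, $r=2^*(s)$, $\lambda=1$ for the two mixed pieces), and close by absorbing the gradient term. Your handling of the mixed terms via a weighted Young inequality and the two-stage choice ``$\delta$ first, then $\eta$'' is more explicit than the paper's exposition, which simply records a two-term combined bound and omits the mixed contributions from the displayed inequality; but the underlying argument is identical.
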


\begin{proof}

Let $(1-\frac{\mu}{N})<\theta<(1+\frac{\mu}{N})$, $u\in H^{s}(\mathbb{R}^N)$. Since $0<\mu<2s$, we may assume that $H=H^{\star}+H_{\star}$ and $K=K^{\star}+K_{\star}$ with $H^{\star},K^{\star}\in L^{\frac{2N}{N-\mu}}(\mathbb{R}^N)$ and $H_{\star},K_{\star}\in L^{\frac{2N}{N+2s-\mu}}(\mathbb{R}^N)$. We take
$$q=r=\frac{2N}{N-2s}, \ l=t=\frac{2N}{N+2s-\mu}, \ \lambda=0;$$
$$q=r=2,\ l=t=\frac{2N}{N-\mu},\ \lambda=2;$$
$$q=2,\ r=\frac{2N}{N-2s},\ l=\frac{2N}{N+2s-\mu},\ t=\frac{2N}{N-\mu}
,\lambda=1;
$$
and
$$q=2,\ r=\frac{2N}{N-2s},\ l=\frac{2N}{N-\mu},\ t=\frac{2N}{N+2s-\mu},\ \lambda=1$$ in Lemma \ref{Es} respectively. We obtain
\begin{equation} \label{E1}
\aligned
\int_{\mathbb{R}^N}\big(|x|^{-\mu}&\ast (H_{\star}|u|^{\theta})\big)K_{\star}|u|^{2-\theta}\\
&\leq C(\int_{\mathbb{R}^N}|H_{\star}|^{\frac{2N}{N+2s-\mu}})^{\frac{N+2s-\mu}{2N}}(\int_{\mathbb{R}^N}|K_{\star}|^{\frac{2N}{N+2s-\mu}})^{\frac{N+2s-\mu}{2N}}(\int_{\mathbb{R}^N}|u|^{\frac{2N}{N-2s}})^{1-\frac{2s}{N}},
\endaligned
\end{equation}
\begin{equation} \label{E2}
\aligned
\int_{\mathbb{R}^N}\big(|x|^{-\mu}\ast (H^{\star}|u|^{\theta})\big)K^{\star}|u|^{2-\theta}\leq C(\int_{\mathbb{R}^N}|H^{\star}|^{\frac{2N}{N-\mu}})^{\frac{N-\mu}{2N}}(\int_{\mathbb{R}^N}|K^{\star}|^{\frac{2N}{N-\mu}})^{\frac{N-\mu}{2N}}\int_{\mathbb{R}^N}|u|^{2},
\endaligned
\end{equation}
\begin{equation} \label{E3}
\aligned
\int_{\mathbb{R}^N}\big(|x|^{-\mu}&\ast (H_{\star}|u|^{\theta})\big)K^{\star}|u|^{2-\theta}\\
&\leq C(\int_{\mathbb{R}^N}|H_{\star}|^{\frac{2N}{N+2s-\mu}})^{\frac{N+2s-\mu}{2N}}(\int_{\mathbb{R}^N}|K^{\star}|^{\frac{2N}{N-\mu}})^{\frac{N-\mu}{2N}}(\int_{\mathbb{R}^N}|u|^{2})^{\frac{1}{2}}(\int_{\mathbb{R}^N}|u|^{\frac{2N}{N-2s}})^{\frac{1}{2}-\frac{s}{N}}
\endaligned
\end{equation}
and
\begin{equation} \label{E4}
\aligned
\int_{\mathbb{R}^N}\big(|x|^{-\mu}&\ast (H^{\star}|u|^{\theta})\big)K_{\star}|u|^{2-\theta}\\
&\leq C(\int_{\mathbb{R}^N}|H^{\star}|^{\frac{2N}{N-\mu}})^{\frac{N-\mu}{2N}}(\int_{\mathbb{R}^N}|K_{\star}|^{\frac{2N}{N+2s-\mu}})^{\frac{N+2s-\mu}{2N}}(\int_{\mathbb{R}^N}|u|^{2})^{\frac{1}{2}}(\int_{\mathbb{R}^N}|u|^{\frac{2N}{N-2s}})^{\frac{1}{2}-\frac{s}{N}}.
\endaligned
\end{equation}
Then, applying Lemma \ref{EMB} and the above inequalities, we have, for every $ u\in H^{s}(\mathbb{R}^N)$,
$$
\aligned
\int_{\mathbb{R}^N}\big(|x|^{-\mu}&\ast (H|u|^{\theta})\big)K|u|^{2-\theta}\\
&\leq C((\int_{\mathbb{R}^N}|H_{\star}|^{\frac{2N}{N+2s-\mu}}\int_{\mathbb{R}^N}|K_{\star}|^{\frac{2N}{N+2s-\mu}})^{\frac{N+2s-\mu}{2N}}\int_{\mathbb{R}^{N}}|(-\Delta)^{\frac{s}{2}}u|^{2}
\endaligned
$$
$$
\hspace{38.6mm}+(\int_{\mathbb{R}^N}|H^{\star}|^{\frac{2N}{N-\mu}}\int_{\mathbb{R}^N}|K^{\star}|^{\frac{2N}{N-\mu}})^{\frac{N-\mu}{2N}}\int_{\mathbb{R}^N}|u|^{2}).
$$
For $\varepsilon>0$, we choose $H^{\star}$ and $K^{\star}$ such that
$$
C(\int_{\mathbb{R}^N}|H_{\star}|^{\frac{2N}{N+2s-\mu}}\int_{\mathbb{R}^N}|K_{\star}|^{\frac{2N}{N+2s-\mu}})^{\frac{N+2s-\mu}{2N}}\leq\varepsilon^{2},
$$
then there exists $C_{\varepsilon,\theta}\in\mathbb{R}$ such that
$$
\int_{\mathbb{R}^N}(|x|^{-\mu}\ast (H|u|^{\theta}))K|u|^{2-\theta}\leq \varepsilon^{2}\int_{\mathbb{R}^{N}}|(-\Delta)^{\frac{s}{2}}u|^{2}+C_{\varepsilon,\theta}\int_{\mathbb{R}^N}|u|^{2}.
$$
\end{proof}

\begin{lem}\label{Integrability}
Let $N\geq 2$, $\mu\in (0,2s)$. If $H, K\in L^{\frac{2N}{N-\mu}}(\mathbb{R}^N)+L^{\frac{2N}{N+2s-\mu}}(\mathbb{R}^N) $ and $u\in H^{s}(\mathbb{R}^N)$ solves
$$
(-\Delta)^{s}u+ u
=\big(|x|^{-\mu}\ast (Hu)\big)K,
$$
then $u\in L^{p}(\mathbb{R}^N)$ for every $p\in[2,\frac{2N^{2}}{(N-\mu)(N-2s)})$. Moreover, there exists a constant $C_p$ independent of $u$ such that
$$
\big(\int_{\mathbb{R}^N}|u|^{p}\big)^{\frac1p}\leq C_p\big(\int_{\mathbb{R}^N}|u|^{2}\big)^{\frac12}.
$$
\end{lem}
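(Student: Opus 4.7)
The plan is a nonlocal Moser/Brezis--Kato bootstrap, mirroring the Laplacian argument of Moroz and Van Schaftingen \cite{MS2} but with Lemma~\ref{ELT} playing the role of their key bilinear estimate. After replacing $u$ by $|u|$ we may assume $u\ge 0$. For a parameter $\theta\in[1,\tfrac{N}{N-\mu})$ and a truncation level $L>0$, introduce
$$
v_L=u\,(u\wedge L)^{\theta-1},\qquad \psi_L=u\,(u\wedge L)^{2(\theta-1)}.
$$
Then $\psi_L\in H^{s}(\R^{N})$ (it is dominated by $L^{2(\theta-1)}u$) and $u\psi_L=v_L^{\,2}$ pointwise, so $\psi_L$ is a legitimate test function in the weak formulation.

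The core ingredient is a fractional Kato-type inequality
$$
\int_{\R^{N}}(-\Delta)^{s/2}u\cdot(-\Delta)^{s/2}\psi_L \;\geq\;\frac{1}{\theta}\int_{\R^{N}}\bigl|(-\Delta)^{s/2}v_L\bigr|^{2},
$$
which I would establish from the pointwise algebraic estimate $(\Phi(a)-\Phi(b))(a-b)\ge\tfrac1\theta(\Psi(a)-\Psi(b))^{2}$ with $\Phi(t)=t(|t|\wedge L)^{2(\theta-1)}$ and $\Psi(t)=t(|t|\wedge L)^{\theta-1}$, integrated against the singular-integral kernel in \eqref{FLPV}. Combined with $\int u\psi_L=\int v_L^{\,2}$, testing the equation against $\psi_L$ yields
$$
\frac{1}{\theta}\|v_L\|_{H}^{\,2} \;\leq\; \int_{\R^{N}}\bigl(|x|^{-\mu}\ast(Hu)\bigr)K\,\psi_L.
$$

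The right-hand side is controlled by Lemma~\ref{ELT}. Rewriting $u=v_L(u\wedge L)^{1-\theta}$ and $\psi_L=v_L(u\wedge L)^{\theta-1}$, the integral is brought into the form handled by Lemma~\ref{ELT} with exponent parameter $\theta_{1}=1/\theta$ (the identification is exact on the diagonal piece $\{u\le L\}\times\{u\le L\}$; the remaining pieces contribute only lower-order terms after the symmetrization $x\leftrightarrow y$ and elementary weight comparisons). Since our choice of $\theta$ forces $1/\theta\in(1-\mu/N,\,1]\subset(1-\mu/N,1+\mu/N)$, Lemma~\ref{ELT} gives, for any $\varepsilon>0$,
$$
\int_{\R^{N}}\bigl(|x|^{-\mu}\ast(Hu)\bigr)K\,\psi_L \;\leq\; \varepsilon^{2}\|v_L\|_{H}^{\,2}+C_{\varepsilon,\theta}\int_{\R^{N}}v_L^{\,2},
$$
and choosing $\varepsilon$ small enough to absorb the first term into the left-hand side of the previous inequality yields $\|v_L\|_{H}^{\,2}\le C_\theta\int v_L^{\,2}\le C_\theta\int u^{2\theta}$.

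The conclusion follows by the Sobolev embedding $\|v_L\|_{L^{2^{\ast}(s)}}\le C\|v_L\|_{H}$ of Lemma~\ref{EMB} and Fatou's lemma as $L\to\infty$:
$$
\Bigl(\int_{\R^{N}}|u|^{\theta\cdot 2^{\ast}(s)}\Bigr)^{2/2^{\ast}(s)} \;\leq\; C_\theta\int_{\R^{N}}|u|^{2\theta}.
$$
Starting from $u\in L^{2}(\R^{N})$ (base case $\theta=1$) and iterating with $\theta$ increasing toward $N/(N-\mu)$, the multiplicative gain $2^{\ast}(s)/2=N/(N-2s)>1$, together with the hypothesis $\mu<2s$ (which guarantees $N/(N-2s)<N/(N-\mu)$), pushes the integrability past any prescribed $p<\tfrac{2N^{2}}{(N-\mu)(N-2s)}$ in finitely many steps, with constants $C_p$ independent of $u$. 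The principal technical obstacle is the fractional Kato inequality: unlike the local case where the chain rule provides it immediately, here one must verify the algebraic inequality directly inside the double-integral representation of $(-\Delta)^{s}$ and ensure that the constant $1/\theta$ is uniform in the truncation parameter $L$.
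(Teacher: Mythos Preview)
Your overall strategy---a Moser/Brezis--Kato iteration driven by Lemma~\ref{ELT}---is the same as the paper's, and your choice of the exponent $\theta_1=1/\theta$ in Lemma~\ref{ELT} matches exactly the paper's choice $\theta=2/p$. One technical difference is harmless: you obtain the Kato-type inequality $\int(-\Delta)^{s/2}u\,(-\Delta)^{s/2}\psi_L\ge \tfrac{1}{\theta}\int|(-\Delta)^{s/2}v_L|^2$ directly from the Gagliardo double integral and the pointwise inequality $(\Phi(a)-\Phi(b))(a-b)\ge\tfrac{1}{\theta}(\Psi(a)-\Psi(b))^2$, whereas the paper passes to the Caffarelli--Silvestre extension so that the ordinary chain rule for $\nabla w$ does the job. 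Both routes are legitimate. (A minor point: your ``WLOG $u\ge0$'' is not quite right since $|u|$ does not solve the equation; work instead with the signed test function $\psi_L=u(|u|\wedge L)^{2(\theta-1)}$ as the paper does.)

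The genuine gap is in your treatment of the truncation error. On the set $\{u>L\}$ one has $u>v_L^{1/\theta}$, so the integrand $|H|u$ is \emph{not} dominated by $|H|\,v_L^{1/\theta}$, and the piece
\[
\int_{\mathbb R^N}\bigl(|x|^{-\mu}\ast(|H|\,u\,\mathbf{1}_{\{u>L\}})\bigr)\,|K|\,\psi_L
\]
cannot be absorbed by the ``symmetrization $x\leftrightarrow y$'' you allude to: $H$ and $K$ sit in different slots of the convolution and are not interchangeable. If one tries to estimate this tail directly by HLS and H\"older, the $L^{2N/(N+2s-\mu)}$ component of $H$ forces $u\mathbf{1}_{\{u>L\}}$ into an $L^m$ space with $1/m=1/(2\theta)-s/N$, i.e.\ $m>2\theta$, which is precisely the integrability you do \emph{not} yet have at this stage of the iteration. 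The paper circumvents this by first approximating $H,K$ with sequences $H_k,K_k\in L^{2N/(N-\mu)}$, solving the approximate linear equations by Lax--Milgram to produce $u_k\rightharpoonup u$, and only then running the truncation argument on $u_k$; with $H_k,K_k$ in the single space $L^{2N/(N-\mu)}$ the tail on $\{|u_k|>T\}$ is controlled by $\|u_k\mathbf{1}_{\{|u_k|>T\}}\|_{2\theta}\to0$. This Brezis--Kato approximation step is not optional in the argument---it is exactly what makes the truncation error tractable---and your proposal omits it.
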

\begin{proof}
Let $\theta=1$ in Lemma \ref{ELT}, there exists $\lambda>0$ such that for every $\varphi\in H^{s}(\mathbb{R}^N)$,
$$
\int_{\mathbb{R}^N}(|x|^{-\mu}\ast (|H\varphi|))|K\varphi|\leq \frac{1}{2}\int_{\mathbb{R}^{N}}|(-\Delta)^{\frac{s}{2}}\varphi|^{2}+\frac{\lambda}{2}\int_{\mathbb{R}^N}|\varphi|^{2}.
$$
We follow the strategy of \cite{BK}, take sequences $(H_{k})_{k\in\mathbb{N}}$ and $(K_{k})_{k\in\mathbb{N}}$ in $L^{\frac{2N}{N-\mu}}(\mathbb{R}^N)$ such that $|H_{k}|\leq|H|$ and $|K_{k}|\leq|K|$, and $H_{k}\rightarrow H$ and $K_{k}\rightarrow K$ almost everywhere in $\mathbb{R}^N$. For every $k\in\mathbb{N}$, the form $a_{k}:H^{s}(\mathbb{R}^N)\times H^{s}(\mathbb{R}^N)\rightarrow\mathbb{R}$ defined for $u, v\in H^{s}(\mathbb{R}^N)$ by
$$
a_{k}(u,v)=\int_{\mathbb{R}^{N}}\big((-\Delta)^{\frac{s}{2}}u(-\Delta)^{\frac{s}{2}}v+uv\big)-\int_{\mathbb{R}^N}(|x|^{-\mu}\ast (H_{k}u))K_{k}v
$$
is bilinear and coercive. Applying Lax-Milgram theorem, there exists an unique solution $u_{k}\in H^{s}(\mathbb{R}^N)$ satisfies
\begin{equation} \label{ELP}
(-\Delta)^{s}u_{k}+ \lambda u_{k}
=(|x|^{-\mu}\ast (H_{k}u_{k}))K_{k} +(\lambda-1)u
\end{equation}
and moreover the sequence $(u_{k})_{k\in\mathbb{N}}$ converges weakly to $u$ in $H^{s}(\mathbb{R}^N)$ as $k\rightarrow\infty$.

In order to continue the proof, we need the weighted function space $X^{s}(\mathbb{R}_{+}^{N+1})$ as the completion of $C_{0}^{\infty}(\mathbb{R}_{+}^{N+1})$ under the norms
\begin{equation} \label{Norm3}
\|w\|_{X^{s}(\mathbb{R}_{+}^{N+1})}=(\kappa_{s}\int_{\mathbb{R}_{+}^{N+1}}y^{1-2s}|\nabla w|^{2})^{\frac{1}{2}},
\end{equation}
where $\kappa_{s}=(2^{1-2s}\Gamma(1-s)/\Gamma(s))$.
By lemma A.2 in \cite{BCDS1}, it follows that
\begin{equation} \label{Norm4}
\|w\|_{X^{s}(\mathbb{R}_{+}^{N+1})}=(\int_{\mathbb{R}^N}|(-\Delta)^{\frac{s}{2}}u|^{2} )^{\frac{1}{2}},
\end{equation}
where $w= E_{s}(u)$. By \eqref{CFLL}, we can see that the problem \eqref{ELP} can be transformed into the following local problem
\begin{equation} \label{EQP}
\left\{\begin{array}{l}
\displaystyle -div(y^{1-2s}\nabla w_{k})=0\hspace{51.64mm} \mbox{in}\hspace{1.14mm} \mathbb{R}_{+}^{N+1},\\
\displaystyle  \partial_{\nu}^{s}w_{k}= -\lambda u_{k}+\big(|x|^{-\mu}\ast (H_{k}u_{k})\big)K_{k} +(\lambda-1)u  \hspace{7.6mm} \mbox{on}\hspace{1.14mm} \mathbb{R}^{N},
\end{array}
\right.
\end{equation}
where $w_{k}= E_{s}(u_{k})$ and $\partial_{\nu}^{s}w_{k}(x,0)\doteq-\frac{1}{\kappa_{s}}\lim_{y\rightarrow0^{+}}y^{1-2s}\frac{\partial w}{\partial y}(x,y)$, $\forall x\in\mathbb{R}^N$. Clearly, if $w_{k}$ is a weak solution of \eqref{EQP}, then $u_{k}=w_{k}(\cdot,0)$ is a weak solution of \eqref{ELP}. A weak solution to \eqref{EQP} is a function $w_{k}\in X^{s}(\mathbb{R}_{+}^{N+1})$ such that
\begin{equation} \label{EQP2}
\kappa_{s}\int_{\mathbb{R}_{+}^{N+1}}y^{1-2s}\nabla w_{k}\nabla\varphi =\int_{\mathbb{R}^N}\Big(-\lambda u_{k}+(|x|^{-\mu}\ast (H_{k}u_{k}))K_{k} +(\lambda-1)u\Big)\varphi ,
\end{equation}
for every $\varphi\in X^{s}(\mathbb{R}_{+}^{N+1}).$

For $T>0$, we denote the truncated function $w_{k,T}:\mathbb{R}_{+}^{N+1}\to \R$ by
$$
w_{k,T}=\left\{\begin{array}{l}
\displaystyle T \ \ \ \mbox{if}\ \ w_{k,T}\leq -T,\\
\displaystyle w_{k} \ \ \ \mbox{if} \ \  -T<w_{k,T}<T,\\
\displaystyle T \ \ \ \mbox{if}\ \ w_{k,T}\geq T\\
\end{array}
\right.
$$
and
$$
 u_{k,T}=w_{k,T}(\cdot,0).
$$
For $p\geq 2$, since $|w_{k,T}|^{p-2}w_{k,T}\in X^{s}(\mathbb{R}_{+}^{N+1})$, we have $|u_{k,T}|^{p-2}u_{k,T}\in H^{s}(\mathbb{R}^N)$.  Take $|u_{k,T}|^{p-2}u_{k,T}$ as a test function in \eqref{EQP2}, we obtain
$$\aligned
\frac{4(p-1)}{p^{2}}\int_{\mathbb{R}^N}|(-\Delta)^{\frac{s}{2}}&(u_{k,T})^{\frac{p}{2}}|^{2}+
\lambda ||u_{k,T}|^{\frac{p}{2}}|^{2}\\
&\leq \frac{4(p-1)}{p^{2}}\kappa_{s}\int_{\mathbb{R}_{+}^{N+1}}y^{1-2s}|\nabla (w_{k,T})^{\frac{p}{2}}|^{2}+\lambda \int_{\mathbb{R}^N}|u_{k,T}|^{p-2}u_{k,T}u_{k}\\
&= (p-1)\kappa_{s}\int_{\mathbb{R}_{+}^{N+1}}y^{1-2s}|w_{k,T}|^{p-2}|\nabla w_{k,T}|^{2}+\lambda \int_{\mathbb{R}^N}|u_{k,T}|^{p-2}u_{k,T}u_{k}\\
&\leq(p-1)\kappa_{s}\int_{\mathbb{R}_{+}^{N+1}}y^{1-2s}|w_{k}|^{p-2}\langle\nabla w_{k},\nabla w_{k,T}\rangle+\lambda \int_{\mathbb{R}^N}|u_{k,T}|^{p-2}u_{k,T}u_{k}\\
&=\int_{\mathbb{R}^N}(|x|^{-\mu}\ast (H_{k}u_{k}))(K_{k}|u_{k,T}|^{p-2}u_{k,T}) +(\lambda-1)u|u_{k,T}|^{p-2}u_{k,T}.
\endaligned
$$
If $p<\frac{2N}{N-\mu}$, using Lemma \ref{ELT} with $\theta=\frac{2}{p}$, there exists $C>0$ such that
$$\aligned
\int_{\mathbb{R}^N}(|x|^{-\mu}\ast (|H_{k}&u_{k,T}|))(|K_{k}||u_{k,T}|^{p-2}u_{k,T})\\
&\leq\int_{\mathbb{R}^N}(|x|^{-\mu}\ast (|H||u_{k,T}|))(|K||u_{k,T}|^{p-1})\\
&\leq\frac{2(p-1)}{p^{2}}\int_{\mathbb{R}^N}|(-\Delta)^{\frac{s}{2}}(u_{k,T})^{\frac{p}{2}}|^{2}+C\int_{\mathbb{R}^N}||u_{k,T}|^{\frac{p}{2}}|^{2}.
\endaligned
$$
So, we have
$$
\frac{2(p-1)}{p^{2}}\int_{\mathbb{R}^N}|(-\Delta)^{\frac{s}{2}}(u_{k,T})^{\frac{p}{2}}|^{2}
\leq C_{1}\int_{\mathbb{R}^N}(|u_{k}|^{p}+|u|^{p})+\int_{A_{k,T}}(|x|^{-\mu}\ast (|K_{k}||u_{k}|^{p-1}))|H_{k}u_{k}|,
$$
where $ A_{k,T}=\{x\in\mathbb{R}^N:|u_{k}|>T\}.$
Since $p<\frac{2N}{N-\mu}$, applying Hardy-Littlewood-Sobolev inequality again,
$$
\int_{A_{k,T}}(|x|^{-\mu}\ast (|K_{k}||u_{k}|^{p-1}))|H_{k}u_{k}|\leq C(\int_{\mathbb{R}^N}||K_{k}||u_{k}|^{p-1}|^{r})^{\frac{1}{r}}(\int_{A_{k,T}}|H_{k}u_{k}|^{l})^{\frac{1}{l}},
$$
with $\frac{1}{r}=1+\frac{N-\mu}{2N}-\frac{1}{p}$ and $\frac{1}{l}=\frac{N-\mu}{2N}+\frac{1}{p}$. By H\"{o}lder's inequality, if $u_{k}\in L^{p}(\mathbb{R}^N)$, then $|H_{k}||u_{k}|\in L^{l}(\mathbb{R}^N)$ and $|K_{k}||u_{k}|^{p-1}\in L^{r}(\mathbb{R}^N)$ , whence by Lebesgue's dominated convergence theorem
$$
\lim_{T\rightarrow\infty}\int_{A_{k,T}}(|x|^{-\mu}\ast (|K_{k}||u_{k}|^{p-1}))|H_{k}u_{k}|=0.
$$
By Lemma \ref{EMB}, we know that there exists $C_{2}>0$ such that
$$
\lim_{k\rightarrow\infty}\sup(\int_{\mathbb{R}^N}|u_{k}|^{\frac{pN}{N-2s}})^{1-\frac{2s}{N}}\leq C_{2}\lim_{k\rightarrow\infty}\sup\int_{\mathbb{R}^N}|u_{k}|^{p},
$$
and thus
$$
(\int_{\mathbb{R}^N}|u|^{\frac{pN}{N-2s}})^{1-\frac{2s}{N}}\leq C_3\big(\int_{\mathbb{R}^N}|u|^{2}\big)^{\frac12}.
$$
By iterating over $p$ a finite number of times we cover the range $p\in[2,\frac{2N}{N-\mu})$. So we can get weak solution $u\in L^{p}(\mathbb{R}^N)$ of \eqref{CFL} for every $p\in[2,\frac{2N^{2}}{(N-\mu)(N-2s)})$.
\end{proof}

\begin{lem}\label{INT22}
 Assume that $N\geq3$, $s\in(0,1)$ and $\mu\in(0,2s)$. If $f\in C(\mathbb{R},\mathbb{R})$ satisfies $(f_1)$, $(f_2)$ and $(f_3)$ and $u\in H^{s}(\mathbb{R}^N)$ solves equation \eqref{CFL}, then $u\in L^p(\R^N)$ for any $p\in [2, +\infty]$.
\end{lem}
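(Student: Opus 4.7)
My plan is to upgrade the finite-range integrability given by Lemma \ref{Integrability} to a uniform bound through a two-step bootstrap: first force the nonlocal potential $V := |x|^{-\mu}\ast F(u)$ to be globally bounded, then run a fractional elliptic bootstrap on the resulting local equation $(-\Delta)^s u + u = Vf(u)$. To start the machine, I would recast \eqref{CFL} as $(-\Delta)^s u + u = (|x|^{-\mu}\ast(Hu))K$ with $H := F(u)/u$ (set to $0$ where $u=0$) and $K := f(u)$. Condition $(f_1)$ gives $|H|,|K| \leq C(|u| + |u|^\gamma)$ with $\gamma := (N+2s-\mu)/(N-2s)$, and the identity $2N\gamma/(N+2s-\mu) = 2^\ast(s)$ combined with the $H^s$-embedding places $H, K$ in $L^{2N/(N-\mu)}(\R^N) + L^{2N/(N+2s-\mu)}(\R^N)$. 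Lemma \ref{Integrability} then yields $u \in L^p(\R^N)$ for every $p \in [2, P)$ with $P := 2N^2/((N-\mu)(N-2s))$.

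Using $|F(u)|\leq C(|u|^2 + |u|^{(2N-\mu)/(N-2s)})$ together with the range $[2,P)$, I can find exponents $r_1 < N/(N-\mu) < r_2$ for which $F(u) \in L^{r_1}\cap L^{r_2}$. Splitting $|x|^{-\mu} = |x|^{-\mu}\chi_{B_1} + |x|^{-\mu}\chi_{B_1^c}$ and applying H\"older on each piece---$|x|^{-\mu}\chi_{B_1}\in L^{r_2'}$ because $r_2 > N/(N-\mu)$, and $|x|^{-\mu}\chi_{B_1^c}\in L^{r_1'}$ because $r_1 < N/(N-\mu)$---gives $V\in L^\infty(\R^N)$.

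Now the equation reads $(-\Delta)^s u + u = Vf(u)$ with $|Vf(u)|\leq C(|u|+|u|^\gamma)$. Inverting $(-\Delta)^s + I$ via the Bessel kernel $G_{2s}$, which belongs to $L^r(\R^N)$ for every $r\in[1,N/(N-2s))$, and iterating Sobolev--Young embeddings, the implication is: $u\in L^p$ yields $Vf(u)\in L^{p/\gamma}$, which returns $u\in L^q$ with $1/q = \gamma/p - 2s/N$ whenever this quantity is positive, and $u\in L^\infty$ once it becomes nonpositive. With $a_k := 1/p_k$ the recursion is $a_{k+1} = \gamma a_k - 2s/N$, and since $\gamma > 1$ the orbit satisfies $a_k - a^\ast = \gamma^k(a_0 - a^\ast)$ with $a^\ast := 2s/(N(\gamma-1))$; choosing the starting exponent $p_0$ close enough to $P$ so that $p_0 > N(\gamma-1)/(2s)$ (possible because $P > N(\gamma-1)/(2s)$ reduces to $\mu(N+4s-\mu) > 0$) drives $a_k \to -\infty$, so after finitely many iterations $u \in L^\infty(\R^N)$. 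Interpolation with $u\in L^2$ then yields $u\in L^p$ for every $p\in[2,+\infty]$.

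The main obstacle I anticipate is verifying the two quantitative gap conditions hidden in the scheme: that $F(u)$ can be placed in $L^r$ simultaneously on both sides of the critical exponent $N/(N-\mu)$ (without which the H\"older split cannot yield $V\in L^\infty$), and that the bootstrap recursion strictly improves the exponent at each step (the subcriticality condition $\gamma < 2^\ast(s)-1$, i.e.\ $\mu > 0$). Both depend crucially on the standing hypothesis $\mu \in (0, 2s)$.
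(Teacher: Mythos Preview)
Your argument is correct. The opening phase---casting \eqref{CFL} as $(-\Delta)^s u + u = (|x|^{-\mu}\ast(Hu))K$ with $H=F(u)/u$, $K=f(u)$, invoking Lemma~\ref{Integrability} to reach $u\in L^p$ for $p\in[2,P)$, and then forcing $V=|x|^{-\mu}\ast F(u)\in L^\infty$ by exhibiting $F(u)\in L^{r_1}\cap L^{r_2}$ with $r_1<N/(N-\mu)<r_2$---is exactly what the paper does (the paper phrases the last step via the Riesz‐potential mapping $L^{N/(N-\mu)}\to L^\infty$, but the content is identical).

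Where you diverge is in the passage from $V\in L^\infty$ to $u\in L^\infty$. The paper lifts the problem to $\R^{N+1}_+$ through the Caffarelli--Silvestre extension and runs a Moser iteration there, first with $g\in L^{N/(2s)}$ to cover all finite exponents and then with $g\in L^{N/s}$ to extract a uniform $L^\infty$ bound via the explicit growth of the iteration constants. Your route stays in $\R^N$: invert $(-\Delta)^s+I$ by its fundamental solution and bootstrap through Young's inequality using $\mathcal K\in L^r$ for $r\in[1,N/(N-2s))$. This is genuinely more elementary---no extension, no truncation, no De Giorgi--Moser machinery---and the fixed-point analysis of the recursion $a_{k+1}=\gamma a_k-2s/N$ is clean. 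Two small caveats: the kernel of $((-\Delta)^s+I)^{-1}$ is not the classical Bessel kernel $G_{2s}$ (whose symbol is $(1+|\xi|^2)^{-s}$ rather than $(1+|\xi|^{2s})^{-1}$), though the $L^r$‐range you quote is correct for the actual resolvent kernel (cf.\ \cite{FQT}); and the implication ``$u\in L^p\Rightarrow Vf(u)\in L^{p/\gamma}$'' should really read $Vf(u)\in L^p+L^{p/\gamma}$, since the linear piece $|u|$ need not lie in $L^{p/\gamma}$ when $p<2\gamma$---but the linear piece always bootstraps to a better exponent than the $|u|^\gamma$ piece, so this does not disturb your recursion. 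The paper's extension approach, by contrast, yields as a by-product the intermediate $H^s$‐type estimates on powers $|u|^{\beta+1}$ that feed directly into the later regularity and Poho\v{z}aev arguments, which may explain the authors' preference.
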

\begin{proof}
We denote $H(x)=F(u(x))/u(x)$ and $K(x)=f(u(x)).$ By $(f_1)$, we know, for every $x\in\mathbb{R}^N$,
$$
H(x)\leq C(\frac12|u(x)|+\frac{N-2s}{2N-\mu}|u(x)|^{\frac{N+2s-\mu}{N-2s}})
$$
and
$$
K(x)\leq C(|u(x)|+|u(x)|^{\frac{N+2s-\mu}{N-2s}}).
$$
Thus $H,K\in L^{2}(\mathbb{R}^N)+L^{\frac{2N}{N+2s-\mu}}(\mathbb{R}^N)$. Applying \ref{Integrability}, we know the weak solution $u\in L^{p}(\mathbb{R}^N)$ for every $p\in[2,\frac{2N^{2}}{(N-\mu)(N-2s)})$. Using $(f_1)$, we know $F(u)\in L^{q}(\mathbb{R}^N)$ for every $q\in[\frac{2N}{2N-\mu},\frac{2N^{2}}{(N-\mu)(2N-\mu)})$. Since $\frac{2N}{2N-\mu}<\frac{N}{N-\mu}<\frac{2N^{2}}{(N-\mu)(2N-\mu)}$, we have $M(x):=|x|^{-\mu}\ast F(u)\in L^{\infty}(\mathbb{R}^N)$.

We will show that $u\in L^p(\R^N)$ for any $p\in [2, +\infty]$. Using the Dirichlet-to-Neumann map expression, we can see that the problem \eqref{CFL} can be transformed into the following local problem
\begin{equation}\label{CFLL2}
\left\{\begin{array}{l}
\displaystyle -div(y^{1-2s}\nabla w)=0\ \  \mbox{in}\ \ \mathbb{R}_{+}^{N+1},\\
\displaystyle  \partial_{\nu}^{s}w= -u+M(x)f(u)  \ \  \mbox{on}\ \  \mathbb{R}^{N},
\end{array}
\right.
\end{equation}
where
$$
\partial_{\nu}^{s}w(x,0)\doteq-\frac{1}{\kappa_{s}}\lim_{y\rightarrow0^{+}}y^{1-2s}\frac{\partial w}{\partial y}(x,y), \ \ \forall\ \  x\in\mathbb{R}^N.
$$

Since $w\in X^{s}(\mathbb{R}_{+}^{N+1})$ is a critical point such that
\begin{equation}\label{EE}
\kappa_{s}\int_{\mathbb{R}_{+}^{N+1}}y^{1-2s}\nabla w\nabla\varphi =\int_{\mathbb{R}^N}\big( -u+M(x)f(u)\big)\varphi ,
\end{equation}
for every $\varphi\in X^{s}(\mathbb{R}_{+}^{N+1}).$ For $T>0$, we denote
$$
w_{T}=\min\{w_+,T\}\ \ \mbox{and}\ \ u_{T}=w_{T}(\cdot,0),
$$
where $w_{+}=\max\{0,w\}$.  Since for $\beta>0$, $|w_{T}|^{2\beta}w\in X^{s}(\mathbb{R}_{+}^{N+1})$, take it as a test function in \eqref{EE}, we deduce that
$$\aligned
&\kappa_{s}\int_{\mathbb{R}_{+}^{N+1}}y^{1-2s}\langle\nabla w,\nabla (|w_{T}|^{2\beta}w)\rangle\\
&= \kappa_{s}\int_{\mathbb{R}_{+}^{N+1}}y^{1-2s}|w_{T}|^{2\beta}|\nabla w|^{2}+(2\beta)\kappa_{s}\int_{\{w\leq T\}}y^{1-2s}w^{2\beta}|\nabla w|^{2}\\
&=\int_{\mathbb{R}^N}\big(-u+M(x)f(u)\big)|u_{T}|^{2\beta}u .
\endaligned
$$
Notice that,
$$
\aligned
&\int_{\mathbb{R}_{+}^{N+1}}\kappa_{s}y^{1-2s}|\nabla (|w_{T}|^{\beta}w)|^{2}\\
&= \int_{\mathbb{R}_{+}^{N+1}}\kappa_{s}y^{1-2s}|w_{T}|^{2\beta}|\nabla w|^{2}+\big(2\beta+\beta^{2}\big)\int_{\{w\leq T\}}\kappa_{s}y^{1-2s}w^{2\beta}|\nabla w|^{2},
\endaligned
$$
we obtain,
$$
\aligned
\int_{\mathbb{R}^N}|(-\Delta)^{\frac{s}{2}}&(|u_{T}|^{\beta}u)|^{2}+ ||u_{T}|^{\beta}u|^{2}\\
&=\int_{\mathbb{R}_{+}^{N+1}}\kappa_{s}y^{1-2s}|\nabla (|w_{T}|^{\beta}w)|^{2}+\int_{\mathbb{R}^N} ||u_{T}|^{\beta}u|^{2}\\
&= \int_{\mathbb{R}_{+}^{N+1}}\kappa_{s}y^{1-2s}|w_{T}|^{2\beta}|\nabla w|^{2}+\int_{\mathbb{R}^N} ||u_{T}|^{\beta}u|^{2}\\
&\hspace{1cm}+\big(2\beta+\beta^{2}\big)\int_{\{w\leq T\}}\kappa_{s}y^{1-2s}w^{2\beta}|\nabla w|^{2}\\
&\leq C_{\beta}\int_{\mathbb{R}^N}M(x)f(u)|u_{T}|^{2\beta}u,
\endaligned
$$
where $C_\beta=1+\frac{\beta}{2}$
Since $M(x)\in L^{\infty}(\mathbb{R}^N)$, from assumption $(f_1)$, $$\frac{f(u)}{u}\leq C_1+|u|^{\frac{4s-\mu}{N-2s}}.$$
Since $u\in L^{p}(\mathbb{R}^N)$ for every $p\in[2,\frac{2N^{2}}{(N-\mu)(N-2s)})$, we know that, for
some constant $C_{1}$ and function $g\in L^{\frac{N}{2s}}(\mathbb{R}^N)$, $g\geq0$ and independent of $T$ and $p$,
\begin{equation}\label{MI1}
M(x)f(u)|u_{T}|^{2\beta}u \leq(C_{1}+g)|u_{T}|^{2\beta}u^{2}.
\end{equation}
So we have that
$$\aligned
\int_{\mathbb{R}^N}|(-\Delta)^{\frac{s}{2}}&(|u_{T}|^{\beta}u)|^{2}+ ||u_{T}|^{\beta}u|^{2}\\
&\leq C_{1}C_{\beta}\int_{\mathbb{R}^N}|u_{T}|^{2\beta}u^{2} +C_{\beta}\int_{\mathbb{R}^N}g|u_{T}|^{2\beta}u^{2} .
\endaligned
$$
and, using Fatou's lemma and monotone convergence, we can pass to the limit as
$T\rightarrow\infty$ to get
$$\aligned
\int_{\mathbb{R}^N}|(-\Delta)^{\frac{s}{2}}&(u_{+})^{\beta+1}|^{2}+ |(u_{+})^{\beta+1}|^{2}
\leq C_{1}C_{\beta}\int_{\mathbb{R}^N}|u_{+}|^{2(\beta+1)} +C_{\beta}\int_{\mathbb{R}^N}g|u_{+}|^{2(\beta+1)} .
\endaligned
$$
For any $M>0$, let $A_1=\{g\leq M\}$, $A_2=\{g> M\}$. Since
$$
\aligned
\int_{\mathbb{R}^N}g|u_{+}|^{2(\beta+1)}&=\int_{A_1}g|u_{+}|^{2(\beta+1)}+\int_{A_2}g|u_{+}|^{2(\beta+1)}\\
&\leq M \int_{A_1}|u_{+}|^{2(\beta+1)}+\big(\int_{A_2}g^{\frac{N}{2s}}\big)^{\frac{2s}{N}}\big(\int_{A_2}|u_{+}|^{(\beta+1)\frac{2N}{N-2s}}\big)^{\frac{N-2s}{N}}\\
&\leq M|{u_{+}}^{(\beta+1)}|^2_2+\vr(M)|{u_{+}}^{(\beta+1)}|^2_{2^*(s)},
\endaligned
$$
we can get,
$$
\|{u_{+}}^{(\beta+1)}\|^2_H\leq C_{\beta}(C_1+M)|{u_{+}}^{(\beta+1)}|^2_2+C_{\beta}\vr(M)|{u_{+}}^{(\beta+1)}|^2_{2^*(s)}.
$$
Using Lemma \ref{EMB} and taking $M$ large enough such that $C_{\beta}C_{2^*(s)}^2\vr(M)\leq \frac12$, we obtain
$$\aligned
|{u_{+}}^{(\beta+1)}|^2_{2^*(s)}\leq C_{\beta}C_{2^*(s)}^2(C_1+M)|{u_{+}}^{(\beta+1)}|^2_2.
\endaligned
$$
Now a bootstrap argument start with $\beta+1=\frac{N}{N-2s}$ show that $u_{+} \in L^p(\R^N)$ for any $p\in [2, +\infty)$. Similarly, we can see $u_{-}\in L^p(\R^N)$ for any $p\in [2, +\infty)$ and hence it holds for $u$.

Now we are ready to show that $u$ is in fact bounded in $\R^N$.
Since show that $u_{+} \in L^p(\R^N)$ for any $p\in [2, +\infty)$, repeat the arguments in \eqref{MI1}, we know there exists some constant $C_{1}$ and function $g\in L^{\frac{N}{s}}(\mathbb{R}^N)$, $g\geq0$ and independent of $T$ and $\beta$ such that
$$\aligned
\int_{\mathbb{R}^N}|(-\Delta)^{\frac{s}{2}}&(|u_{T}|^{\beta}u)|^{2}+ ||u_{T}|^{\beta}u|^{2}\leq C_{1}C_{\beta}\int_{\mathbb{R}^N}|u_{T}|^{2\beta}u^{2} +C_{\beta}\int_{\mathbb{R}^N}g|u_{T}|^{2\beta}u^{2} .
\endaligned
$$
Using Fatou's lemma and monotone convergence, we can pass to the limit as
$T\rightarrow\infty$ to get
$$\aligned
\int_{\mathbb{R}^N}|(-\Delta)^{\frac{s}{2}}&(u_{+})^{\beta+1}|^{2}+ |(u_{+})^{\beta+1}|^{2}
\leq C_{1}C_{\beta}\int_{\mathbb{R}^N}|u_{+}|^{2(\beta+1)} +C_{\beta}\int_{\mathbb{R}^N}g|u_{+}|^{2(\beta+1)} .
\endaligned
$$
Using Young's inequality, we see
$$
\int_{\mathbb{R}^N}g|u_{+}|^{2(\beta+1)} \leq|g|_{\frac{N}{s}}|(u_{+})^{\beta+1}|_{2}|(u_{+})^{\frac{p}{2}}|_{2^{\ast}(s)}\leq
|g|_{\frac{N}{s}}(\lambda|(u_{+})^{\beta+1}|_{2}^{2}+\frac{1}{\lambda}|(u_{+})^{\beta+1}|_{2^{\ast}(s)}^{2}).
$$
Therefore,
$$
\int_{\mathbb{R}^N}|(-\Delta)^{\frac{s}{2}}(u_{+})^{\beta+1}|^{2}+ |(u_{+})^{\beta+1}|^{2}\leq C_{\beta}(C_{1}+|g|_{\frac{N}{s}}\lambda)|(u_{+})^{\beta+1}|_{2}^{2}+\frac{C_{\beta}|g|_{\frac{N}{s}}}{\lambda}|(u_{+})^{\beta+1}|_{2^{\ast}(s)}^{2}.
$$
Using Lemma \ref{EMB} and taking $\lambda$ large enough such that $\frac{C_{\beta}|g|_{\frac{N}{s}}}{\lambda}C_{2^*(s)}^{2}=\frac{1}{2}$,
we obtain
$$\aligned
|(u_{+})^{\beta+1}|_{2^{\ast}(s)}^{2}\leq 2C_{2^*(s)}^{2}C_{\beta}(C_{1}+|g|_{\frac{N}{s}}\lambda)|(u_{+})^{\beta+1}|_{2}^{2}=M_{\beta}|(u_{+})^{\beta+1}|_{2}^{2}.
\endaligned
$$
Since $M_{\beta}\leq CC^2_{\beta}\leq C(1+\beta)^2\leq M_0e^{2\sqrt{1+\beta}}$, we know
$$\aligned
|u_{+}|_{2^{\ast}(s)(\beta+1)}\leq M_{0}^{1/(\beta+1)}e^{1/\sqrt{1+\beta}}|u_{+}|_{2(\beta+1)}.
\endaligned
$$
Start with  $\beta_{0}=0$, $2(\beta_{n+1}+1)=2^{\ast}(s)(\beta_{n}+1)$, an iteration shows
$$\aligned
|u_{+}|_{2^{\ast}(s)(\beta_{n}+1)}\leq M_{0}^{\sum_{i=0}^{n}1/(\beta_{i}+1)}e^{\sum_{i=0}^{n}1/\sqrt{\beta_{i}+1}}|u_{+}|_{2(\beta_{0}+1)}.
\endaligned
$$
Since $\beta_{n}+1=(\frac{2^{\ast}(s)}{2})^{n}=(\frac{N}{N-2s})^{n}$, we can get that,
$$
\sum_{i=0}^{\infty}1/(\beta_{i}+1)<\infty\hspace{2.14mm}\mbox{and}\hspace{2.14mm}\sum_{i=0}^{\infty}1/\sqrt{\beta_{i}+1}<\infty
$$
and from this we deduce that
$$
|u_{+}|_{\infty}=\lim_{n\rightarrow\infty}|u_{+}|_{2^{\ast}(s)(\beta_{n}+1)}<\infty.
$$
Thus, $u_{+}\in L^{\infty}(\mathbb{R}^N)$. Clearly, the same is true for $u_{-}$ and hence for $u$.
\end{proof}

In fact we can show that the solutions for equation \eqref{CFL} possess some regularity if the nonlinearity $f$ has some more regularity.
\begin{Prop}\label{Regu1} [Proposition 2.8, \cite{Si}]
Let $w=(-\Delta)^{s}u$. Assume $w\in C^{0,\alpha}(\mathbb{R}^{n})$ and $u\in L^{\infty}(\mathbb{R}^{n})$, for $\alpha\in(0,1]$ and $\sigma>0$.

$(i).$ If $\alpha+2s\leq1$, then $u\in C^{0,\alpha+2s}(\mathbb{R}^{n})$. Moreover,
$$
\|u\|_{C^{0,\alpha+2s}(\mathbb{R}^{n})}\leq C(\|u\|_{L^{\infty}}+\|w\|_{C^{0,\alpha}})
$$

for a constant $C$ depending only on $n$, $\alpha$ and $s$.

$(ii).$ If $\alpha+2s>1$, then $u\in C^{1,\alpha+2s-1}(\mathbb{R}^{n})$. Moreover,
$$
\|u\|_{C^{1,\alpha+2s-1}(\mathbb{R}^{n})}\leq C(\|u\|_{L^{\infty}}+\|w\|_{C^{0,\alpha}})
$$
\end{Prop}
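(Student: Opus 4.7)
The plan is to leverage the Caffarelli--Silvestre extension in order to transform the nonlocal regularity question into a boundary regularity question for a local degenerate elliptic PDE, and then invoke the weighted Schauder theory. Let $\tilde u = E_s(u)$ denote the $s$-harmonic extension of $u$, so that $\mathrm{div}(y^{1-2s}\nabla \tilde u)=0$ in $\mathbb{R}_+^{N+1}$ and the conormal boundary datum is $\partial_\nu^s \tilde u(\cdot,0) = w$. Since the weight $y^{1-2s}$ belongs to the Muckenhoupt class $A_2$, there is a complete Schauder theory for operators of this form; in particular, one has boundary $C^{0,\alpha+2s}$ (respectively $C^{1,\alpha+2s-1}$) estimates for $\tilde u$ from $C^{0,\alpha}$ Neumann data, provided the solution itself is bounded. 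Restricting $\tilde u$ to $y=0$ returns $u$ with the claimed regularity, together with a quantitative bound in terms of $\|u\|_{L^\infty}$ and $\|w\|_{C^{0,\alpha}}$, which is exactly what (i) and (ii) assert.

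The alternative, more direct route is to follow Silvestre's own argument through the Riesz potential. After a translation--scaling normalization reducing to $\|u\|_{L^\infty}+[w]_{C^{0,\alpha}}\le 1$, one decomposes $w = w\chi_{B_{2r}}+w\chi_{B_{2r}^c}=:w_1+w_2$, and correspondingly $u=u_1+u_2$, with $u_1$ recovered from $w_1$ by convolution against the Riesz kernel $|x|^{-(N-2s)}$. For $u_1$ one controls the second-order increment $|u_1(h)+u_1(-h)-2u_1(0)|$ by splitting the kernel integration into $|y|\le 4|h|$ (where the H\"older modulus of $w_1$ is consumed) and $|y|>4|h|$ (where a second-order Taylor expansion of the kernel supplies the gain). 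For $u_2$, one exploits that $(-\Delta)^{s}u_2=0$ on $B_{2r}$ together with $u_2\in L^\infty$ to conclude that $u_2$ is smooth with interior derivative bounds depending only on $\|u\|_{L^\infty}$. Summing the two estimates produces the desired H\"older bound; the dichotomy $\alpha+2s\le 1$ versus $\alpha+2s>1$ tracks whether the resulting modulus of continuity is sub-Lipschitz (yielding (i)) or strong enough to force differentiability (yielding (ii)).

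The main technical obstacle is the singular-integral bookkeeping for $u_1$: one must show that the Riesz potential of a compactly supported $C^{0,\alpha}$ function enjoys an essentially optimal gain of $2s$ derivatives, uniformly in the length scale, and in the case $\alpha+2s>1$ one must in addition identify a genuine gradient for $u$, typically via a Campanato-type characterization of $C^{1,\beta}$, rather than merely improving the modulus of continuity. Once this estimate is in hand, the remainder of the argument is largely routine scaling and decomposition, and the passage between the nonlocal formulation and the extension formulation gives the flexibility to transfer estimates without loss in the exponent.
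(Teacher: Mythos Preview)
The paper does not prove this proposition at all: it is simply quoted verbatim as Proposition~2.8 of Silvestre \cite{Si} and then invoked as a black box in the subsequent regularity bootstrap. So there is no ``paper's own proof'' to compare your attempt against.

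That said, your second route is essentially Silvestre's original argument, so if the goal were to supply a self-contained proof, that sketch is on the right track. Your first route via the extension and boundary Schauder theory for $A_2$-weighted operators is also viable in principle, but note that the weighted boundary Schauder estimates you would need (with Neumann data in $C^{0,\alpha}$ and gain of exactly $2s$ in regularity up to the boundary) are themselves nontrivial and essentially equivalent in difficulty to the nonlocal statement you are trying to prove; one does not get them for free from the $A_2$ theory, which in its standard form yields only H\"older continuity with some unspecified exponent, not the sharp exponent $\alpha+2s$. If you pursue that route you would have to cite or reprove a sharp boundary Schauder result for the extension operator, at which point the Riesz-potential argument is more direct.
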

for a constant $C$ depending only on $n$, $\alpha$ and $s$.

\begin{Prop}\label{Regu2}  [Proposition 2.9, \cite{Si}]
Let $w=(-\Delta)^{s}u$. Assume $w\in L^{\infty}(\mathbb{R}^{n})$ and $u\in L^{\infty}(\mathbb{R}^{n})$ for $s>0$.

$(i).$ If $2s\leq1$, then $u\in C^{0,\alpha}(\mathbb{R}^{n})$ for any $\alpha<2s$. Moreover,
$$
\|u\|_{C^{0,\alpha}(\mathbb{R}^{n})}\leq C(\|u\|_{L^{\infty}}+\|w\|_{L^{\infty}})
$$

for a constant $C$ depending only on $n$, $\alpha$ and $s$.

$(ii).$ If $2s>1$, then $u\in C^{1,\alpha}(\mathbb{R}^{n})$ for any $\alpha<2s-1$. Moreover,
$$
\|u\|_{C^{1,\alpha}(\mathbb{R}^{n})}\leq C(\|u\|_{L^{\infty}}+\|w\|_{L^{\infty}})
$$

for a constant $C$ depending only on $n$, $\alpha$ and $s$.
\end{Prop}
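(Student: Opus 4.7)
My plan is to establish the H\"older (respectively $C^{1,\alpha}$) regularity of $u$ by inverting the fractional Laplacian via the Riesz potential, combined with a localization argument to deal with the non-local tails that would otherwise prevent a global convolution representation.

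First, I would exploit the formal identity $\widehat{(-\Delta)^{-s}f}(\xi)=|\xi|^{-2s}\hat{f}(\xi)$, so that at the level of symbols one has $u=I_{2s}\ast w$ with $I_{2s}(x)=c_{n,s}|x|^{2s-n}$. Since this global convolution is not absolutely convergent under only $w\in L^\infty(\R^n)$, I would localize: fix $x_0\in\R^n$, a cutoff $\eta\in C_c^\infty(\R^n)$ equal to $1$ on $B_2(x_0)$ and supported in $B_4(x_0)$, and split $u=u_1+u_2$ where $u_1=I_{2s}\ast(\eta w)$ and $u_2=u-u_1$. By construction $(-\Delta)^s u_2=(1-\eta)w$ on $B_2(x_0)$, so the right-hand side for $u_2$ vanishes in a neighborhood of $x_0$; and $u_1$ is a convolution of an $L^\infty$ compactly supported datum with an explicit kernel.

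Next I would estimate $u_1$ by splitting the convolution $u_1(x)=\int_{\R^n}|x-z|^{2s-n}(\eta w)(z)\,dz$ into the near-field $|x-z|\le 1$ and the far-field $|x-z|>1$. Direct computation shows that when $2s\le 1$ the difference $|u_1(x)-u_1(y)|$ is bounded by $C\|w\|_{L^\infty}|x-y|^\alpha$ for every $\alpha<2s$, while for $2s>1$ the gradient $\nabla u_1$ is bounded and $|\nabla u_1(x)-\nabla u_1(y)|\le C\|w\|_{L^\infty}|x-y|^\alpha$ for every $\alpha<2s-1$, where the strictness is forced by the logarithmic divergence at the integer borderline. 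For $u_2$ I would invoke the Caffarelli--Silvestre $s$-harmonic extension $U_2=E_s(u_2)$, which solves $-\mathrm{div}(y^{1-2s}\nabla U_2)=0$ on $\R^{N+1}_+$ with the homogeneous Neumann condition $\partial_\nu^s U_2=0$ on $B_2(x_0)\times\{0\}$. Interior boundary regularity for this degenerate elliptic equation with an $A_2$ weight (of Fabes--Kenig--Serapioni type) then gives smoothness of $U_2$ up to $B_1(x_0)\times\{0\}$, with quantitative bounds in terms of $\|U_2\|_{L^\infty(\R^{N+1}_+)}\le \|u_2\|_{L^\infty(\R^N)}\le \|u\|_{L^\infty}+\|u_1\|_{L^\infty}\le C(\|u\|_{L^\infty}+\|w\|_{L^\infty})$.

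Combining the $u_1$ and $u_2$ estimates at every $x_0$ and taking a supremum produces the claimed global $C^{0,\alpha}$ or $C^{1,\alpha}$ bound. The principal obstacle is the non-locality of $(-\Delta)^s$: even though $(-\Delta)^s u_2$ is identically zero near $x_0$, the value of $u_2$ at infinity still contributes, which is precisely why the bound on the right-hand side must involve $\|u\|_{L^\infty}$ in addition to $\|w\|_{L^\infty}$, and why the extension technique is so useful -- it trades the non-local operator for a local degenerate elliptic equation on a half-space where classical De Giorgi--Nash--Moser--Schauder theory applies. A secondary subtlety is the borderline case $2s=1$ (respectively $2s-1\in\Z$), which is excluded by the strict inequalities $\alpha<2s$ and $\alpha<2s-1$ in the statement.
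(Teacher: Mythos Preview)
The paper does not prove this proposition: it is quoted verbatim as Proposition~2.9 of Silvestre~\cite{Si} and invoked as a black box in the subsequent regularity bootstrap. There is therefore no in-paper proof to compare against.

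That said, your outline is essentially the standard route and is close in spirit to Silvestre's own argument, which likewise exploits the Riesz potential representation $u=c_{n,s}\,|x|^{2s-n}\ast w$ (suitably interpreted) together with direct kernel estimates. One point to tighten: invoking Fabes--Kenig--Serapioni theory for the $s$-harmonic piece $u_2$ yields only \emph{some} H\"older exponent for the extension, not the $C^{1,\alpha}$ regularity with $\alpha$ arbitrarily close to $2s-1$ that you need in case~(ii). For the full interior smoothness of $u_2$ you should instead use the structure specific to $s$-harmonic functions---either the explicit Poisson-type balayage formula expressing $u_2$ on $B_1(x_0)$ in terms of its values outside $B_2(x_0)$, or the even reflection of the extension $U_2$ across $\{y=0\}$ followed by higher-order interior estimates for the resulting equation. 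With that adjustment the plan is sound.
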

Using the Lemma \ref{INT22} and Propositions above, we know that the weak solution $u$ is in fact a classical solution of \eqref{CFL}. From Lemma \ref{INT22}, we know that $u\in L^{\infty}(\mathbb{R}^N)$.  From Proposition \ref{Regu2}, we get some $\sigma\in(0,1)$ depending on $s$ such
that $u\in C^{0,\sigma}(\mathbb{R}^N)$ and hence $[(|x|^{-\mu}\ast F(u))f(u)-u]\in C^{0,\alpha}(\mathbb{R}^N)$ for some $\alpha\in(0,1)$. Thus $u\in C^{0,\alpha+2s}(\mathbb{R}^N)$. If $\alpha+2s>1$, following the proof of Proposition \ref{Regu1}, we can obtain that $u\in C^{1,\alpha+2s-1}(\mathbb{R}^N)$. What's more
\begin{Prop}\label{PropClassical} Assume that $f\in C^1(\R,\R)$, then the weak solution $u\in C^{2,\alpha}(\mathbb{R}^N)$ for some $\alpha$ depending on $s$ and satisfies
$$
\frac{N-2s}{2}\int_{\mathbb{R}^N}|(-\Delta)^{\frac{s}{2}}u|^{2}+\frac{N}{2}\int_{\mathbb{R}^N} |u|^{2}=\frac{2N-\mu}{2}\int_{\mathbb{R}^N}(|x|^{-\mu}\ast F(u))F(u).
$$
\end{Prop}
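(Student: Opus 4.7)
The claim combines classical regularity with the Poho\v{z}aev identity, so I would split the argument accordingly. By Lemma \ref{INT22}, $u \in L^{p}(\R^N)$ for every $p \in [2,\infty]$. Hence $F(u) \in L^{1}(\R^N)\cap L^{\infty}(\R^N)$, the potential $M(x) := (|x|^{-\mu}\ast F(u))(x)$ is bounded and locally H\"older continuous, and the right-hand side $g(x) := -u(x) + M(x)f(u(x))$ is in $L^{\infty}(\R^N)$. Applying Proposition \ref{Regu2} to $(-\Delta)^{s} u = g$ yields $u \in C^{0,\alpha}(\R^N)$ for some $\alpha>0$ (or $u\in C^{1,\alpha}(\R^N)$ if $2s>1$). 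Since $f \in C^{1}$, $g$ inherits the same H\"older regularity, so Proposition \ref{Regu1} applies and a finite bootstrap produces $u \in C^{2,\alpha}(\R^N)$ for some $\alpha$ depending only on $s$. In particular, $u$ solves \eqref{CFL} classically.

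To establish the Poho\v{z}aev identity, I would lift $u$ to $w = E_s(u) \in X^{s}(\R^{N+1}_{+})$ and use the vector-field multiplier $x \cdot \nabla_x w$. Multiplying $-\mathrm{div}(y^{1-2s}\nabla w) = 0$ by $x\cdot\nabla_x w$ and integrating by parts over the cylinder $C_R := B_R \times (0, R)$ splits into an interior term and boundary terms. After the elementary identity
$$\nabla w \cdot \nabla(x \cdot \nabla_x w) = |\nabla_x w|^2 + \tfrac{1}{2}\,x \cdot \nabla_x |\nabla w|^2,$$
the interior integral collapses, upon letting $R\to\infty$, to $\tfrac{N-2s}{2\kappa_s}\int_{\R^N}|(-\Delta)^{s/2}u|^{2}$. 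The trace on $\{y=0\}$, combined with the boundary condition $\kappa_s\partial_\nu^{s} w = -u + Mf(u)$, contributes $\int_{\R^N}(-u+Mf(u))(x\cdot \nabla u)\,dx$. A classical integration by parts in $x$ gives $-\int u (x\cdot\nabla u) = \tfrac{N}{2}\int |u|^2$, and for the nonlocal term a direct computation using the symmetry of the double integral $\iint |x-y|^{-\mu}F(u(x))F(u(y))\,dx\,dy$ (equivalently, differentiating this double integral at $\lambda=1$ along the rescaling $u \mapsto u(\lambda\,\cdot\,)$) yields
$$\int_{\R^N} Mf(u)(x\cdot \nabla u)\,dx = -\tfrac{2N-\mu}{2}\int_{\R^N} MF(u)\,dx.$$
Combining the three contributions produces exactly the stated identity.

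The principal technical obstacle is justifying that the boundary integrals on the lateral and top faces $\partial C_R \setminus (B_R \times \{0\})$ vanish as $R\to\infty$. This requires enough decay of $u$ and of $\nabla w$ at infinity; one obtains this by writing $u$ as the convolution of $g \in L^{\infty}\cap L^{1}$ with the fractional Bessel kernel (which decays like $|x|^{-N-2s}$), so $u(x)\to 0$ with an integrable tail compatible with the polynomial growth in $R$ of the boundary terms. Alternatively, one can multiply by a cutoff version $(x\cdot\nabla_x w)\,\varphi(x/R,y/R)$ of the multiplier, pass to the limit as $R\to\infty$, and avoid explicit pointwise decay estimates. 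Either route delivers the identity rigorously.
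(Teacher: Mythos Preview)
Your regularity argument is essentially the paper's: Lemma~\ref{INT22} gives $u\in L^\infty$, then Propositions~\ref{Regu2} and~\ref{Regu1} bootstrap H\"older regularity. The paper is more explicit about the case distinctions in $s$ (namely $s\in(1/2,1)$, $s=1/2$, $s\in(1/(2k+2),1/(2k)]$), and about the fact that Proposition~\ref{Regu1} as stated only reaches $C^{1,\beta}$, so to obtain $C^{2,\alpha}$ one must differentiate the equation and apply the estimates to $\partial_{x_i}u$; you subsume this under ``a finite bootstrap,'' which is acceptable but slightly glosses over the mechanism.

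For the Poho\v{z}aev identity there is a genuine gap in your multiplier choice. You use the \emph{tangential} vector field $x\cdot\nabla_x w$ on the extension. With this multiplier, the identity
\[
\nabla w\cdot\nabla(x\cdot\nabla_x w)=|\nabla_x w|^2+\tfrac12\,x\cdot\nabla_x|\nabla w|^2
\]
and an integration by parts in $x$ leave you with
\[
\int_{\R^{N+1}_+} y^{1-2s}\Big(|\nabla_x w|^2-\tfrac{N}{2}|\nabla w|^2\Big),
\]
which is \emph{not} equal to $-\tfrac{N-2s}{2}\int y^{1-2s}|\nabla w|^2$ by elementary manipulation; equality would require the extra identity $\int y^{1-2s}|\nabla_x w|^2=s\int y^{1-2s}|\nabla w|^2$, which is true for the $s$-harmonic extension but is itself a Poho\v{z}aev-type fact (coming from the $y$-scaling, i.e.\ the multiplier $y\,\partial_y w$) and is nowhere invoked in your argument. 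The paper avoids this by using the \emph{full} multiplier $z\cdot\nabla w=(x,y)\cdot\nabla w$ (with a cutoff $\varphi_\lambda$ and $\lambda\to0$): then the $y$-integration by parts against the weight $y^{1-2s}$ produces precisely the missing $2s$, and the interior term collapses directly to $-\tfrac{N-2s}{2}\int y^{1-2s}|\nabla w|^2$. Your alternative with the cutoff $\varphi(x/R,y/R)$ does not fix this, since the issue is the multiplier, not the truncation. Replace $x\cdot\nabla_x w$ by $(x,y)\cdot\nabla w$ and your outline goes through; the trace on $\{y=0\}$ is unchanged because $y\,\partial_y w\to0$ as $y\to0^+$.
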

\begin{proof}

Let $s\in(1/2,1)$ and $u$ in $ L^{\infty}(\mathbb{R}^{N})$ be a solution of the equation
$$
(-\Delta)^{s}u+ u
=\big(|x|^{-\mu}\ast F(u)\big)f(u)\hspace{4.14mm}\mbox{in}\hspace{1.14mm} \mathbb{R}^N.
$$
Denote by $W(u(x))=\big(|x|^{-\mu}\ast F(u)\big)f(u)(x)$, we know $W\in C^1$.  Applying $(ii)$ of Proposition \ref{Regu2}, we know
$u\in C^{1,\alpha}(\mathbb{R}^{N})$. Moreover $\partial_{x_i}u$ satisfies
$$
(-\Delta)^{s}\partial_{x_i}u(x)
=\partial_{x_i}\big( W(u(x))\big),\hspace{4.14mm}\mbox{for any}\hspace{1.14mm} x\in\mathbb{R}^N.
$$
Applying $(ii)$ of Proposition \ref{Regu2} to $\partial_{x_i}u(x)$ again, it follows that $\partial_{x_i}u$ belongs to $ C^{1,\alpha}(\mathbb{R}^{N})$ for any $\alpha<2s-1$ and thus the claim is proved.

Let $s=1/2$. Since $W$ is in $C^{1}$, $(i)$ of proposition \ref{Regu2} implies $u\in C^{0,\alpha}(\mathbb{R}^{N})$ for any $\alpha<1$. Then $(ii)$ of proposition \ref{Regu1}  with $w:=W(u)$ yields that the function $u$ belongs to $ C^{1,\alpha}(\mathbb{R}^{N})$ for any $\alpha<1$. Now, we can argue as for the case $s\in(1/2,1)$ to obtain the desired
regularity for $u$.

Finally, let $s\in(0,1/2)$ and let $u\in L^{\infty}(\mathbb{R}^{N})$ be the solution. Then $(i)$ of proposition \ref{Regu2} implies $u\in C^{0,\alpha}(\mathbb{R}^{N})$ for any $\alpha<2s$. Then, for $s\in(1/4,1/2)$ we can apply $(ii)$ of proposition \ref{Regu1} and we get $u\in C^{1,\alpha+2s-1}(\mathbb{R}^{N})$. Hence, $\partial_{x_i}u$ is well defined with $\partial_{x_i}\big( W(u(x))\big)$ belonging to $C^{0,\alpha+2s-1}(\mathbb{R}^{N})$ and again by $(ii)$ of proposition \ref{Regu1} we get $\partial_{x_i}u\in C^{1,\alpha+2s-1}(\mathbb{R}^{N})$ for any $\alpha<2s$.

For $s\in(0,1/4]$, by $(ii)$ of proposition \ref{Regu1},  we know $u\in C^{0,\alpha+2s}(\mathbb{R}^{N})$ for any
$\alpha<2s$. Thus, when $s\in(1/6, 1/4]$, apply $(ii)$ of proposition \ref{Regu1} twice  and argue as in the
case $s\in(1/4,1/2)$ and we get $\partial_{x_i}u\in C^{1,\alpha+4s-1}(\mathbb{R}^{N})$, for any $\alpha<2s$.

By iterating the above procedure on $k\in\mathbb{N}$, we obtain that, when $s\in(1/(2k+2),1/2k]$,
$u$ belongs to $u\in C^{2,\alpha+2k-1}(\mathbb{R}^{N})$ for any $\alpha<2s$.

Since $u\in C^{2,\alpha}(\mathbb{R}^N)$, we have $w\in C^{2,\alpha}(\mathbb{R_{+}}^{N+1})$, where $w=E_{s}(u)$. We denote $\mathbb{D}=\{z=(x,y)\in\mathbb{R}^N\times[0,\infty):|z|\leq1\}$. Fix $\varphi\in C_{c}^{1}(\mathbb{R_{+}}^{N+1})$ such that $\varphi=1$ on $\mathbb{D}$ and $\varphi_{\lambda }:=\varphi(\lambda x,\lambda y)$. The function $w_{\lambda}$ defined for $\lambda\in(0,\infty)$ and $z\in\mathbb{R_{+}}^{N+1}$ by
$$
w_{\lambda}(z)=\varphi_{\lambda }z\cdot\nabla w(z)
$$
can be used as a test function in the equation to obtain
$$
\kappa_{s}\int_{\mathbb{R}_{+}^{N+1}}y^{1-2s}\nabla w\nabla w_{\lambda}dz =\int_{\mathbb{R}^N}\big( -u+\big(|x|^{-\mu}\ast F(u)\big)f(u)\big)w_{\lambda}(x,0) dx.
$$
From the arguments in [15, Theorem 6.1] and [28, Proposition 3.5], we know
$$
\lim_{\lambda\rightarrow 0}\int_{\mathbb{R}_{+}^{N+1}}y^{1-2s}\nabla w\nabla w_{\lambda}dz=-\frac{N-2s}{2}\int_{\mathbb{R}_{+}^{N+1}}y^{1-2s}|\nabla w|^{2}dz,
$$
$$
\lim_{\lambda\rightarrow 0}\int_{\mathbb{R}^N}uw_{\lambda}(x,0)=-\frac{N}{2}\int_{\mathbb{R}^N} |u|^{2}
$$
and
$$
\lim_{\lambda\rightarrow 0}\int_{\mathbb{R}^N}\big(|x|^{-\mu}\ast F(u)\big)f(u)w_{\lambda}(x,0) =-\frac{2N-\mu}{2}\int_{\mathbb{R}^N}(|x|^{-\mu}\ast F(u))F(u).
$$
The conclusion follows the above equalities and \eqref{Norm4}.

\end{proof}

\section{Proof of the main results}

\textbf{Proof of Theorem $1.3$.}
Since $u_{0}$ is a nontrivial solution of \eqref{CFL}, we have $J(u_{0})\geq c$. From Lemma \ref{Existence} and by the definition of the ground state energy level $c$, we can get $c\leq c^{\star}$.

Follow the idea of Jeanjean and Tanaka [20, lemma 2.1], we define the path $\tilde{\gamma}:[0,\infty)\rightarrow H^{s}(\mathbb{R}^N)$ by
$$
\tilde{\gamma}(\tau)(x)=\left\{\begin{array}{l}
\displaystyle u_{0}(x/\tau) \hspace{6.14mm} \mbox{if}\hspace{1.14mm} \tau>0,\\
\displaystyle 0 \hspace{16.6mm} \mbox{if} \hspace{1.14mm}\tau=0.\\
\end{array}
\right.
$$
Since the function $\tilde{\gamma}$ is continuous on $(0,\infty)$ and (2.1), we have, for every $\tau>0$,
$$
\int_{\mathbb{R}^N}|(-\Delta)^{\frac{s}{2}}\tilde{\gamma}(\tau)|^{2}+\int_{\mathbb{R}^N} |\tilde{\gamma}(\tau)|^{2}=\tau^{N-2s}\int_{\mathbb{R}^N}|(-\Delta)^{\frac{s}{2}}u_{0}|^{2}+\tau^{N}\int_{\mathbb{R}^N} |u_{0}|^{2},
$$
which implies $\tilde{\gamma}$ is continuous at 0. By the Poho\v{z}aev identity in Proposition \ref{PropClassical} and (2.1), the functional $J(\tilde{\gamma}(\tau))$ can be computed for every $\tau>0$ as
$$\aligned
J(\tilde{\gamma}(\tau))&=\frac{\tau^{N-2s}}{2}\int_{\mathbb{R}^N}|(-\Delta)^{\frac{s}{2}}u_{0}|^{2}+\frac{\tau^{N}}{2}\int_{\mathbb{R}^N} |u_{0}|^{2}-\frac{\tau^{2N-\mu}}{2}\int_{\mathbb{R}^N}(|x|^{-\mu}\ast F(u_{0}))F(u_{0})\\
&=(\frac{\tau^{N-2s}}{2}-\frac{(N-2s)\tau^{2N-\mu}}{2(2N-\mu)})\int_{\mathbb{R}^N}|(-\Delta)^{\frac{s}{2}}u_{0}|^{2}
+(\frac{\tau^{N}}{2}-\frac{N\tau^{2N-\mu}}{2(2N-\mu)})\int_{\mathbb{R}^N} |u_{0}|^{2}.
\endaligned
$$
It is easy to see that $J(\tilde{\gamma}(\tau))$ achieves strict global maximum at 1: for every $\tau\in[0,1)\cup(1,\infty)$, $J(\tilde{\gamma}(\tau))<J(u_{0})$. Then after a suitable change of variable, for every $t_{0}\in(0,1)$, there exists a path $\gamma\in C([0,1];H^{s}(\mathbb{R}^N))$ such that
$$\aligned
\gamma&\in\Gamma,\\
\gamma(t_{0})&=u_{0},\\
J(\gamma(t))&<J(u_{0}), \hspace{5.14mm}\forall t\in[0,t_{0})\cup(t_{0},1].
\endaligned
$$
Let $v_{0}\in H^{s}(\mathbb{R}^N)\backslash\{{0}\} $ be another solution of \eqref{CFL} such that $J(v_{0})\leq J(u_{0})$. If we lift $v_{0}$ to a
path and recall the definition (1.4) of $ c^{\star}$, we conclude that $J(u_{0})\leq c^{\star}\leq J(v_{0})$. Then, we have proved that $J(u_{0})=J(v_{0})=c=c^{\star}$, and this concludes the proof of
Theorem \ref{EXS}.$ \hspace{13.14mm} \Box$

\begin{Rem}\label{R1}
 If $f\in C^1(\mathbb{R},\mathbb{R})$ satisfies $(f_1)$ is and odd function that $f\geq0$ on $(0,\infty)$, then any groundstate $u$ of \eqref{CFL} is positive. To observe this, we recall that there exists a path $\gamma\in C([0,1];H^{s}(\mathbb{R}^N))$ such that
$$\aligned
\gamma&\in\Gamma,\\
\gamma(t_{0})&=u,\\
J(\gamma(t))&<J(u), \hspace{5.14mm}\forall t\in[0,t_{0})\cup(t_{0},1].
\endaligned
$$
Notice that
$$
\int_{\mathbb{R}^{N}}|(-\Delta)^{\frac{s}{2}}|u||^{2}=\int_{\mathbb{R}^{N}}|(-\Delta)^{\frac{s}{2}}u|^{2},
$$
we have
$$
J(u)=J(|u|).
$$
Hence for any $t\in[0,t_{0})\cup(t_{0},1]$, there holds
$$
J(|\gamma(t)|)=J(\gamma(t))<J(u)=J(|u_{0}|).
$$
From Lemma 5.1 in \cite{MS2}, we know $|u|$ is also a groundstate. If $u(x_0)=0$ for some $x_0\in\R^N$, then one obtains $$
\int_{\mathbb{R}^N}\frac{|u(x_0+y)|+|u(x_0-y)|}{|x-y|^{N+2s}}dy=0,
$$
implying that $u=0$, a contradiction, thus $|u|>0$.
\end{Rem}

\begin{Rem}\label{R2}
Denote by
$$
G_{c}=\{u \in H^{s}(\mathbb{R}^N): J(u)=c \hspace{1.14mm}\hbox{and}\hspace{1.14mm} u\hspace{1.14mm} \hbox{is a groundstate of (1.1)} \},
$$
i.e. the set of groundstates of \eqref{CFL}, then $G_{c}$ is compact in $ H^{s}(\mathbb{R}^N)$ up to translations in $\mathbb{R}^N$.
In fact, for every $u \in G_{c}$, we have
$$
J(u)=\frac{N-\mu+2s}{2(2N-\mu)}\int_{\mathbb{R}^N}|(-\Delta)^{\frac{s}{2}}u|^{2}+\frac{N-\mu}{2(2N-\mu)}\int_{\mathbb{R}^N} |u|^{2}.
$$
Thus, for every $(u_{n})_{n\in \mathbb{N}} \in G_{c}$, up to a subsequence and translations, we can assume that
$u_{n}\rightharpoonup u$. Notice that
$$\aligned
J(u)&=\frac{N-\mu+2s}{2(2N-\mu)}\int_{\mathbb{R}^N}|(-\Delta)^{\frac{s}{2}}u|^{2}dx+\frac{N-\mu}{2(2N-\mu)}\int_{\mathbb{R}^N} |u|^{2}dx\\
&=\lim_{n\rightarrow\infty}\inf\frac{N-\mu+2s}{2(2N-\mu)}\int_{\mathbb{R}^N}|(-\Delta)^{\frac{s}{2}}u_{n}|^{2}dx+\frac{N-\mu}{2(2N-\mu)}\int_{\mathbb{R}^N} |u_{n}|^{2}dx,
\endaligned
$$
and hence $(u_{n})_{n\in \mathbb{N}}$ converges strongly to $u$ in $H^{s}(\mathbb{R}^N)$.
\end{Rem}

Involving the symmetric property of groundstates, we need to recall some elements of the theory of polarization \cite{BS, MS1, MS2}. Assume that $H\subset\mathbb{R}^N$ is a closed half-space and that $\sigma_{H}$ is the reflection with respect to $\partial H$. The
polarization $u^{H}:\mathbb{R}^N\rightarrow\mathbb{R}$ of $u^{H}:\mathbb{R}^N\rightarrow\mathbb{R}$ is defined for $x\in\mathbb{R}^N$ by
$$
u^{H}(x)=\left\{\begin{array}{l}
\displaystyle \max(u(x),u(\sigma_{H}(x))) \hspace{9.6mm} \mbox{if}\hspace{1.14mm} x\in H,\\
\displaystyle \min(u(x),u(\sigma_{H}(x))) \hspace{10.6mm} \mbox{if} \hspace{1.14mm}x\not\in H.\\
\end{array}
\right.
$$
We denote $w^{H}=E_{s}(u^{H})$, where $E_{s}(u^{H})$ is $s-harmonic$
extension of $u^{H}$.

\begin{lem}\label{Polar} If $u \in H^{s}(\mathbb{R}^N)$, then $u^{H} \in H^{s}(\mathbb{R}^N)$ and
$$
\int_{\mathbb{R}^{N}}|(-\Delta)^{\frac{s}{2}}u^{H}|^{2}dx=\int_{\mathbb{R}^{N}}|(-\Delta)^{\frac{s}{2}}u|^{2}dx.
$$
\end{lem}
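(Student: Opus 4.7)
The plan is to work with the Gagliardo singular-integral representation of the fractional seminorm and reduce the claim to an elementary pair-wise rearrangement argument. Starting from
$$
\int_{\mathbb{R}^N} |(-\Delta)^{s/2} u|^2 \, dx = \frac{c_{N,s}}{2} \iint_{\mathbb{R}^N \times \mathbb{R}^N} \frac{(u(x)-u(y))^2}{|x-y|^{N+2s}} \, dx \, dy,
$$
one observes first that polarization only rearranges values pointwise between $x$ and $\sigma_H(x)$, so $|u^H|_2 = |u|_2$; this gives $u^H \in L^2(\mathbb{R}^N)$ immediately, and the whole problem is moved to controlling the Gagliardo double integral.

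The natural next step is to decompose $\mathbb{R}^N \times \mathbb{R}^N$ into the four cells $H \times H$, $H \times H^c$, $H^c \times H$, $H^c \times H^c$, and to use the fact that $\sigma_H$ is an isometry via the substitutions $x \mapsto \sigma_H(x)$ and/or $y \mapsto \sigma_H(y)$ to rewrite all four contributions as integrals over $H \times H$. For fixed $(x,y) \in H \times H$, setting $\bar{x} := \sigma_H(x)$, $\bar{y} := \sigma_H(y)$ and $a := u(x)$, $b := u(\bar{x})$, $c := u(y)$, $d := u(\bar{y})$, the four points group into pairs at two distances $r_1 := |x-y| = |\bar{x} - \bar{y}|$ and $r_2 := |x - \bar{y}| = |\bar{x} - y|$, plus the self-reflection pairs at $|x - \bar{x}|$ and $|y - \bar{y}|$. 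An elementary computation using that $x,y$ lie on the same side of $\partial H$ gives $r_1 \leq r_2$.

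The crux is then the algebraic identity
$$
(a-c)^2 + (b-d)^2 + (a-d)^2 + (b-c)^2 = 2(a^2+b^2+c^2+d^2) - 2(a+b)(c+d),
$$
whose right-hand side is symmetric under swapping $a \leftrightarrow b$ or $c \leftrightarrow d$, and is therefore invariant under the two swaps that define polarization at the pairs $\{x, \bar{x}\}$ and $\{y, \bar{y}\}$. Since the self-reflection contributions at the distances $|x - \bar{x}|$ and $|y - \bar{y}|$ involve only the single squared differences $(a-b)^2$ and $(c-d)^2$, which are trivially preserved under the pair-swap, combining these with the four-pair identity and the $r_1$/$r_2$-weighted redistribution produced by $u^H(x) = \max(a,b)$, $u^H(\bar{x}) = \min(a,b)$ on $H$ yields the claimed equality. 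The delicate point — and the step I expect to be the main obstacle — is verifying that the redistribution of squared differences across the two distinct weights $r_1^{-(N+2s)}$ and $r_2^{-(N+2s)}$ cancels in the right way; this is exactly where the specific min/max choice in the definition of $u^H$ is used, and where one must be careful to track the algebra rather than producing only a one-sided inequality.
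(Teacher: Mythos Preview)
Your approach is genuinely different from the paper's: you work directly with the Gagliardo double integral, whereas the paper passes through the Caffarelli--Silvestre extension $w=E_s(u)$ and tries to invoke the classical $H^1$ identity $\int|\nabla v^H|^2=\int|\nabla v|^2$ slice by slice in the extension variable.

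However, the ``delicate point'' you flag is a real obstruction, not just bookkeeping. With your notation and $w_1:=r_1^{-(N+2s)}\ge w_2:=r_2^{-(N+2s)}$, the four--point contribution to the Gagliardo energy is $w_1\big[(a-c)^2+(b-d)^2\big]+w_2\big[(a-d)^2+(b-c)^2\big]$. When $(a-b)(c-d)<0$ the polarization simply swaps the two bracketed sums, and since $(a-c)^2+(b-d)^2-(a-d)^2-(b-c)^2=-2(a-b)(c-d)>0$ in that case, the larger bracket moves from the larger weight $w_1$ to the smaller weight $w_2$; concretely, for $a=d=1$, $b=c=0$ the block drops from $2w_1$ to $2w_2$. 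Thus your computation yields only
\[
\int_{\mathbb{R}^N}|(-\Delta)^{s/2}u^H|^2\le\int_{\mathbb{R}^N}|(-\Delta)^{s/2}u|^2,
\]
with strict inequality for generic $u$; the equality asserted in the lemma fails for $0<s<1$. The $H^1$ case is special precisely because $|\nabla u^H|$ equals either $|\nabla u|$ or $|\nabla u|\circ\sigma_H$ almost everywhere, a purely local fact with no analogue for the nonlocal energy. The paper's extension argument has the mirror--image gap: $E_s(u^H)$ is not in general the $x$--polarization of $E_s(u)$, so the slice--by--slice identity is unjustified; via the energy--minimising property of the extension one again recovers only $\le$. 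Since the downstream application (the radial symmetry lemma) needs only this inequality, your Gagliardo computation---once its conclusion is weakened to $\le$---is a clean and self--contained proof of what is actually required.
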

\begin{proof}
From Lemma 5.4 in \cite{MS2}, there holds
$$
\int_{\mathbb{R}^{N}}|\nabla u^{H}|^{2}dx=\int_{\mathbb{R}^{N}}|\nabla u|^{2}dx.
$$
So, for every $y\in \mathbb{R}_{+}$,
$$
\int_{\mathbb{R}^{N}}|\nabla w^{H}|^{2}dx
=\int_{\mathbb{R}^{N}}|\nabla w|^{2}dx.
$$
Applying \eqref{Norm4}, we have
$$
\int_{\mathbb{R}^{N}}|(-\Delta)^{\frac{s}{2}}u^{H}|^{2}dx=\kappa_{s}\int_{\mathbb{R}_{+}^{N+1}}y^{1-2s}|\nabla w^{H}|^{2}dxdy
$$
$$
\hspace{23.14mm}=\kappa_{s}\int_{\mathbb{R}_{+}^{N+1}}y^{1-2s}|\nabla w|^{2}dxdy=\int_{\mathbb{R}^{N}}|(-\Delta)^{\frac{s}{2}}u|^{2}dx.
$$
\end{proof}

\begin{lem}\label{Polar2} ([27, lemma 5.4]). Assume that $u\in L^{2}(\mathbb{R}^{N})$ is nonnegative. There exist $x_{0}\in \mathbb{R}^{N}$ and a nonincreasing function $v:(0,\infty)\rightarrow\mathbb{R}$ such that for almost every $x\in \mathbb{R}^{N}$, $u(x)=v(|x-x_{0}|)$ if and only if either $u^{H}=u$ or $u^{H}=u(\sigma_{H})$.
\end{lem}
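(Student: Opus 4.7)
The statement is an equivalence, so I would handle both directions separately, reading the polarization identity as being required for every closed half-space $H$.

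\emph{Radial symmetry $\Rightarrow$ polarization identity.} Assume $u(x)=v(|x-x_0|)$ for some $x_0\in\R^N$ and nonincreasing $v:(0,\infty)\to\R$. For a closed half-space $H$, a direct computation with the signed distance to $\partial H$ shows: if $x_0\in H$, then $|x-x_0|\leq |\sigma_H(x)-x_0|$ for every $x\in H$ (the reflection $\sigma_H$ sends $x$ to the side opposite from $x_0$), and the inequality reverses for $x\notin H$. Monotonicity of $v$ then gives $u(x)\geq u(\sigma_H(x))$ on $H$ and $u(x)\leq u(\sigma_H(x))$ off $H$, so directly from the definition of $u^H$ one gets $u^H=u$. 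If $x_0\notin H$ the symmetric argument yields $u^H=u\circ\sigma_H$; if $x_0\in\partial H$ then $u=u\circ\sigma_H$ and both identities hold.

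\emph{Polarization identity $\Rightarrow$ radial symmetry.} This is the deeper direction. I would pass to the super-level sets $A_t:=\{x:u(x)>t\}$ for $t>0$. Using the compatibility $\{u^H>t\}=(A_t)^H$ between polarization of functions and of sets, the hypothesis translates into: for every closed half-space $H$, either $(A_t)^H=A_t$ or $(A_t)^H=\sigma_H(A_t)$ (up to null sets). The key set-theoretic claim is that a measurable set $A\subset\R^N$ of finite measure with this property is a ball up to null set. I would prove this by fixing a natural center (e.g.\ the barycenter of $A$), showing by a contradiction that $A$ must then be invariant under every reflection fixing that center, so $A$ is a union of concentric spheres, and the polarization identity forces monotonicity of characteristic functions along the radius, forcing $A$ itself to be a ball.

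Applying this to every $A_t$ shows that all super-level sets of $u$ are balls. A final short argument shows these balls are all concentric at a common point $x_0$: if two nested super-level balls had different centers, one could select a half-space $H$ separating the two centers for which the polarization identity fails for at least one of them, contradicting the hypothesis. Setting $v(r):=\sup\{t:B(x_0,r)\subset A_t\}$ recovers $u(x)=v(|x-x_0|)$ with $v$ nonincreasing. The main obstacle is the set-theoretic claim that polarization invariance forces a set to be a ball; the standard argument uses convergence of iterated random polarizations to the Schwarz symmetric decreasing rearrangement in $L^p$, so that polarization-fixation implies coincidence with the rearrangement.
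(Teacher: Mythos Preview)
The paper does not prove this lemma at all: it is stated with a bare citation ``[27, lemma~5.4]'' (in context almost certainly a misprint for \cite{MS2}, since the neighbouring polarization lemmas all point to Lemmas~5.3--5.5 of Moroz--Van~Schaftingen) and no argument is given. So there is nothing in the paper to compare your sketch against; you have supplied considerably more than the authors did.

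Your plan is essentially correct. The forward direction is a clean direct computation. For the converse, reducing to super-level sets and the set-theoretic claim ``a finite-measure set $A$ with $A^{H}\in\{A,\sigma_{H}(A)\}$ for every $H$ must be a ball'' is the right move, and the concentricity step for the family $(A_{t})_{t>0}$ works once you observe that the alternative in the hypothesis is the \emph{same} for all $t$ once $H$ is fixed (it is a condition on $u$, not on each level separately); for a ball $B(x_{t},r)$ one checks directly that $B^{H}=B$ forces $x_{t}\in H$ and $B^{H}=\sigma_{H}(B)$ forces $x_{t}\in\overline{H^{c}}$, so two distinct centres would be separated by some $H$ and both alternatives fail.

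One small technical gap: you propose to pin down the centre of each $A_{t}$ via its barycenter, but a set of finite Lebesgue measure need not have a finite first moment, so $\int_{A_{t}}x\,dx$ may diverge and the barycenter may be undefined. This is easy to repair. Either replace the barycenter by a centre that always exists (e.g.\ the centre of the Schwarz rearrangement after a suitable translation), or bypass the level-set reduction entirely and run the iterated-polarization argument you allude to at the end: under the hypothesis every polarization of $u$ is an isometric copy of $u$, a suitable sequence of such polarizations converges in $L^{2}$ to the Schwarz rearrangement $u^{\ast}$, and a short compactness argument on the Euclidean isometry group (rotation parts live in the compact $O(N)$; translation parts stay bounded because $u\circ T_{n}\to u^{\ast}\neq 0$ strongly) extracts a limit isometry $T$ with $u=u^{\ast}\circ T^{-1}$. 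This latter route is closest to how the cited reference actually proceeds.
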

\begin{lem} \label{R3}
  If $f\in C^1(\mathbb{R},\mathbb{R})$ satisfies $(f_1)$ is and odd function that $f\geq0$ on $(0,\infty)$, then any groundstate $u \in H^{s}(\mathbb{R}^N)$ of \eqref{CFL} is radially symmetric about a point.
\end{lem}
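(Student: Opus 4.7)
The plan is to prove radial symmetry via polarization, following the approach of Moroz--Van Schaftingen \cite{MS1, MS2}. By Remark \ref{R1}, I may assume that $u > 0$ on $\R^N$. I then fix an arbitrary closed half-space $H \subset \R^N$ and form the polarization $u^H$. Because $f$ is odd with $f \geq 0$ on $(0,\infty)$, the primitive $F$ is even and nondecreasing on $[0,\infty)$, and since $u \geq 0$ this gives $F(u^H) = F(u)^H$ pointwise. Lemma \ref{Polar} yields
$$
\int_{\R^N}|(-\Delta)^{s/2}u^H|^{2} = \int_{\R^N}|(-\Delta)^{s/2}u|^{2},
$$
and a direct rearrangement gives $\|u^H\|_{2} = \|u\|_{2}$. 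Moreover, the standard polarization inequality for Riesz energies (see \cite[Lemma 5.3]{MS2}) produces
$$
C_H := \int_{\R^N}\bigl(|x|^{-\mu}\ast F(u^H)\bigr)F(u^H) \;\geq\; \int_{\R^N}\bigl(|x|^{-\mu}\ast F(u)\bigr)F(u) =: C.
$$

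The next step is to promote this to an equality by comparing mountain-pass levels along rescalings. Exactly as in \eqref{M1}, for $\tau>0$ one has
$$
J\bigl(u^H(\cdot/\tau)\bigr) = \tfrac{\tau^{N-2s}}{2}A + \tfrac{\tau^{N}}{2}B - \tfrac{\tau^{2N-\mu}}{2}C_H,
$$
where $A = \int|(-\Delta)^{s/2}u|^{2}$ and $B = \int|u|^{2}$. For $\tau$ sufficiently large $J(u^H(\cdot/\tau))<0$, so after reparametrization over $[0,1]$ this defines a path in $\Gamma$, whence $c^\star \leq \sup_{\tau>0} J(u^H(\cdot/\tau))$. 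On the other hand, Proposition \ref{PropClassical} shows that $\tau \mapsto \tfrac{\tau^{N-2s}}{2}A + \tfrac{\tau^{N}}{2}B - \tfrac{\tau^{2N-\mu}}{2}C$ attains its global maximum at $\tau=1$ with value $J(u) = c = c^\star$, and the maximum of this expression is strictly decreasing in the last coefficient. Thus $C_H > C$ would force $\sup_{\tau}J(u^H(\cdot/\tau)) < c^\star$, contradicting the mountain-pass bound. Hence $C_H = C$ and equality holds in the Riesz polarization inequality above.

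Finally, I would invoke the equality case of that inequality, which together with the positivity of $u$ forces either $F(u^H) = F(u)$ or $F(u^H) = F(u)\circ\sigma_H$ almost everywhere, and therefore either $u^H = u$ or $u^H = u\circ\sigma_H$ almost everywhere, by inverting $F$ on the range of $u$ (plateaus of $F$ on this range can be excluded using the equation for $u$). Since $H$ was an arbitrary closed half-space, Lemma \ref{Polar2} produces a point $x_0 \in \R^N$ and a nonincreasing $v:(0,\infty)\to\R$ such that $u(x) = v(|x-x_0|)$ for a.e.\ $x\in\R^N$. The main obstacle is the equality-case analysis: one must establish rigidity for the Riesz polarization inequality in the fractional framework and rule out intermediate configurations, which is handled by exploiting the continuity of $u$ from Section 3 and the equation itself, in the spirit of \cite[Section 5]{MS2}.
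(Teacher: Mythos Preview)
Your argument is correct and follows the same polarization strategy as the paper: establish equality in the Riesz polarization inequality for $F(u)$ via the mountain-pass level, deduce the dichotomy $u^H\in\{u,\,u\circ\sigma_H\}$ from the equality case (deferring, as the paper does, to \cite[Section~5]{MS2}), and conclude with Lemma~\ref{Polar2}. The one technical difference is in the comparison path: the paper polarizes the entire optimal path pointwise, setting $\gamma^H(t):=(\gamma(t))^H$, and uses $J(\gamma^H(t))\le J(\gamma(t))<c^\star$ for $t\neq t_0$ together with $\gamma^H\in\Gamma$ to force $J(u^H)=c^\star$; you instead rescale $u^H$ directly and exploit strict monotonicity of $\sup_{\tau>0}J$ in the convolution coefficient. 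Both routes yield the same equality $C_H=C$, and the remaining steps coincide.
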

\begin{proof}
Similar to the arguments in Remark \ref{R1}, there exists $\gamma(t_0)=u$ and for every $t\in[0,1]$, $\gamma(t)\geq0.$ For every $H$, we define the path $\gamma^{H}(t)=(\gamma(t))^{H}\in[0,1]\rightarrow H^{s}(\mathbb{R}^N)$. By lemma \ref{Polar}, we know $\gamma^{H}$ is continuous. Since $F$ is nondecreasing, $F(u^{H})=(F(u))^{H}$, lemma \ref{Polar} and lemma 5.5 in \cite{MS2} imply that
$$
J(\gamma^{H}(t))\leq J(\gamma(t)).
$$
Thus $\gamma^{H}\in\Gamma$ and so $\max_{t\in[0,1]}J(\gamma^{H}(t))\geq c^{\star}$.
Since for every $t\in[0,t_0)\cup(t_0,1]$,
$$
J(\gamma^{H}(t))<J(\gamma(t))<c^{\star},
$$
from Lemma 5.1 in \cite{MS2}, we get
$$
J(\gamma^{H}(t_0))=J(\gamma(t_0))=c^{\star},
$$
Then, we have either $(F(u))^{H}=F(u)$ or $F(u^{H})=F(u(\sigma_{H}))$ in $\mathbb{R}^N$. Repeat the arguments in section 5 of \cite{MS2}, we know $u^{H}=u$ or $u^{H}=u(\sigma_{H})$.  Consequently lemma \ref{Polar2} implies $u(x)=v(|x-x_{0}|)$.\end{proof}

\end{document}